\title{Comparison of Wiener index and Zagreb eccentricity indices
\thanks{
 Email addresses: kexxu1221@126.com(K. Xu), kinkardas2003@gmail.com(K. C. Das), \newline sandi.klavzar@fmf.uni-lj.si (S.\ Klav\v{z}ar), 1213966965@qq.com(H. Li).
}
 }
 \author{Kexiang Xu $^{a},$ Kinkar Chandra Das $^{b},$ Sandi Klav\v{z}ar $^{c,d,e},$ Huimin Li $\/^{a}$ \\\\
 $^{a}$ \small  College of Science, Nanjing University of
 Aeronautics \& Astronautics,\\
 \small Nanjing, Jiangsu 210016, PR China\\
 $^{b}$ \small Department of Mathematics, Sungkyunkwan University,\\
 \small Suwon 440-746, Republic of Korea\\
  $^{c}$ \small Faculty of Mathematics and Physics, University of Ljubljana, Slovenia \\
 $^{d}$ \small Faculty of Natural Sciences and Mathematics, University of Maribor, Slovenia \\
  $^{e}$ \small Institute of Mathematics, Physics and Mechanics, Ljubljana, Slovenia
  }
 \date{}
\begin{document}

\newtheorem{theorem}{\bf Theorem}[section]
\newtheorem{corollary}[theorem]{\bf Corollary}
\newtheorem{algorithm}[theorem]{\bf Algorithm}
\newtheorem{lemma}[theorem]{\bf Lemma}
\newtheorem{claim}[theorem]{\bf Claim}
\newtheorem{observation}[theorem]{\bf Observation}
\newtheorem{proposition}[theorem]{\bf Proposition}
\newtheorem{conjecture}[theorem]{\bf Conjecture}
\newtheorem{remark}[theorem]{\bf Remark}
\newtheorem{problem}[theorem]{\bf Problem}
\newtheorem{example}[theorem]{\bf Example}
\newtheorem{definition}[theorem]{\bf Definition}

\renewcommand{\proofname}{\textup{\textbf{Proof}}}
\newcommand{\s}{\color{blue}}
\newcommand{\avd}{{\rm avd}}
\newcommand{\avt}{{\rm avt}}
\newcommand{\Tr}{{\rm Tr}}
\newcommand{\diam}{{\rm diam}}
\newcommand{\rad}{{\rm rad}}
\newcommand{\cp}{\,\square\,}
\newcommand{\Ecc}{{\rm Ecc}}

\maketitle

\begin{center}
(Received on December 12, 2019)
\end{center}

 \begin{abstract}
The first and the second Zagreb eccentricity index of a graph $G$ are defined as $E_1(G)=\sum_{v\in
V(G)}\varepsilon_{G}(v)^{2}$ and $E_2(G)=\sum_{uv\in E(G)}\varepsilon_{G}(u)\varepsilon_{G}(v)$, respectively, where $\varepsilon_G(v)$ is the eccentricity of a vertex $v$. In this paper the invariants $E_1$, $E_2$, and the Wiener index are compared on graphs with diameter $2$, on trees, on a newly introduced class of universally diametrical graphs, and on Cartesian product graphs. In particular, if the diameter of a tree $T$ is not too big, then $W(T) \ge E_2(T)$ holds, and if the diameter of $T$ is large, then $W(T) <  E_1(T)$ holds.
\end{abstract}

\noindent
\textbf{Keywords:} graph distance; Wiener index; Zagreb eccentricity index; tree; Cartesian product of graphs

% \medskip\noindent
% \textbf{AMS Math.\ Subj.\ Class.\ (2010)}: 05C12, 05C76

 \baselineskip=0.30in

%%%%%%%%%%%%%%%%%%%%%%%%%%%%%%%%%%%
 \section{Introduction}
%%%%%%%%%%%%%%%%%%%%%%%%%%%%%%%%%%%

Graphs considered in this paper are finite, undirected, and simple. If $G=(V(G), E(G))$ is a graph, we will use $n(G) = |V(G)|$ for its order and $m(G) = |E(G)|$ for its size. The \textit{degree} $\deg_G(v)$ of $v\in V(G)$ is the number of vertices in $G$ adjacent to $v$. The complement of $G$ is denoted with $\overline{G}$. The eccentricity $\varepsilon_G(v)$ (or $\varepsilon(v)$ for short) of a vertex $v\in V(G)$ is the maximum distance from $v$ to the vertices of $G$, that is, $\varepsilon_G(v)=\max\limits_{u\in V(G)} d_G(v,u)$. The \textit{eccentric set} of $v$ is $\Ecc_G(v) = \{u:\ d_G(v,u) = \varepsilon_G(v)\}$, cf.~\cite{XLDK2018}, and the \textit{total eccentricity} of $G$ is $\varepsilon(G) = \sum\limits_{v\in V(G)}\varepsilon_G(v)$ (see more related results in \cite{I2012}). The \textit{diameter} and the \textit{radius} of $G$ are $\diam(G)=\max\limits_{v\in V(G)}\varepsilon_G(v)$ and $\rad(G)=\min\limits_{v\in V(G)}\varepsilon_G(v)$, respectively. A graph is \textit{$k$-self-centered graph} if $\diam(G) = \rad(G) = k$.

A graphical invariant is a function from the set of graphs to the reals which is invariant under graph automorphisms. In chemical graph theory, graphical invariants are most often referred to as topological indices. Among the oldest topological indices are the well-known Zagreb indices first introduced in~\cite{gutmantri1972}, where Gutman and Trinajsti\'{c} examined the dependence of total $\pi$-electron energy on molecular structure. The work was further elaborated in~\cite{grt1975}. The {\em first Zagreb index} $M_{1}(G)$ and the {\em second Zagreb index} $M_{2}(G)$ of a (molecular) graph are defined as
$$M_{1}(G)=\sum\limits_{v\in
V(G)}\deg_{G}(v)^{2}\quad {\rm and}\quad M_{2}(G)=\sum\limits_{uv\in
E(G)}\deg_{G}(u)\deg_{G}(v)\,.$$
These two classical topological indices reflect the extent of
branching of the molecular carbon-atom skeleton
\cite{tc2000}. See~\cite{BDFG2017, javaid-2019, ji-2018, XDB2014, yurtas-2019, zhan-2019} for various recent results on Zagreb indices. In analogy with the first and the second Zagreb index, Vuki\v{c}evi\'{c} and Graovac~\cite{VG2010} introduced the {\em first} and the {\em second Zagreb eccentricity index} as
$$E_1(G)=\sum\limits_{v\in
V(G)}\varepsilon_{G}(v)^{2}\quad {\rm and}\quad E_2(G) = \sum\limits_{uv\in E(G)}\varepsilon_{G}(u)\varepsilon_{G}(v)\,.$$
For  properties of $E_1$ and $E_2$ see~\cite{DLG2013, DZT2012, QZL2017, XZT2011}, see also~\cite{li-2017,XDL2016,YQTF2014} for another role of eccentricity in chemical graph theory.

The oldest topological index in chemical graph theory, however, is the Wiener index~\cite{Wi1947}. It is still of very high current nterest, cf.~\cite{AK2018, dobrynin-2018, knor-2016, knor-2019, wei-2019, zhang-2019} and is defined on a connected graph $G$ as $W(G)=\sum\limits_{\{u,v\}\subseteq V(G)}d_G(u,v)$. For a vertex $v\in V(G)$, the {\em transmission} $\Tr_G(v)$ of $v$ is the sum of the distances from $v$ to other vertices in $G$, so that
\begin{eqnarray}
W(G)=\frac{1}{2}\sum\limits_{v\in V(G)}\Tr_G(v)\,.\label{e1.2}
\end{eqnarray}

Recently, some results were proved on the comparison between the Wiener index and the total eccentricity of graphs~\cite{DAKD2019}, while in~\cite{XK19} the so-called Wiener complexity was compared with the eccentric complexity. In this paper we continue the research in this direction by comparing the Wiener index,  the first Zagreb eccentricity index, and the second Zagreb eccentricity index. In the next section we focus on graphs with diameter $2$ and prove that in the majority of cases either $E_1(G) < E_2(G)$ or $E_1(G) > E_2(G)$ holds for such graphs $G$, and classify when one of the two options occurs. In Section~\ref{sec:trees} we consider trees, while in Section~\ref{sec:UD} we introduce and study universally diametrical graphs. We conclude with two results on Cartesian product graphs.

%%%%%%%%%%%%%%%%%%%%%%%%%%%%%%%%%%%
\section{Graphs with diameter $2$}
%%%%%%%%%%%%%%%%%%%%%%%%%%%%%%%%%%%

$K_n$ is the unique graph of order $n$ and diameter $1$. Clearly, $E_2(K_n)=W(K_n)={n\choose 2}>n=E_1(K_n)$ for $n\geq 3$.  Hereafter we thus consider the graphs with diameter at least $2$, in this section those with diameter $2$. If $n\ge 3$, then denote by ${\cal{G}}_n^2$ the set of graphs of order $n$ with diameter $2$. We first compare $E_1$ and $E_2$.

\begin{proposition}\label{k2-SC}
If $G$ is a self-centered, not-complete graph, then $E_2(G)\geq E_1(G)$ with equality holding if and only if $G$ is a cycle.
\end{proposition}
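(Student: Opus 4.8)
The plan is to exploit the defining feature of a self-centered graph, namely that every vertex has the same eccentricity $k := \rad(G) = \diam(G)$. Since $G$ is not complete we have $k \ge 2$. Substituting the constant value $\varepsilon_G(v) = k$ into the two indices immediately yields the clean expressions
\[
E_1(G) = \sum_{v\in V(G)} k^2 = n(G)\, k^2 \qquad \text{and} \qquad E_2(G) = \sum_{uv\in E(G)} k^2 = m(G)\, k^2 .
\]
Hence the comparison $E_2(G)\ge E_1(G)$ reduces entirely to the purely structural inequality $m(G)\ge n(G)$, and equality in the index inequality becomes equivalent to $m(G)=n(G)$.

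The crux is therefore to prove that a self-centered non-complete graph satisfies $m(G)\ge n(G)$, and I would derive this from a degree bound: I claim that $G$ has minimum degree at least $2$. I would argue by contradiction. Suppose $v$ were a leaf with unique neighbour $u$. Because $G$ is not complete it is neither $K_1$ nor $K_2$, so $G$ is connected with at least three vertices and $u$ has a neighbour other than $v$; in particular $\varepsilon_G(u)\ge 1$, and every shortest path leaving $v$ passes through $u$, so $d_G(v,w)=1+d_G(u,w)$ for all $w\ne v$. Taking the maximum over $w\ne v$ then gives $\varepsilon_G(v)=1+\varepsilon_G(u)>\varepsilon_G(u)$, contradicting self-centeredness. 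This no-leaf step is the main obstacle; once it is established the remainder is bookkeeping.

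With $\deg_G(v)\ge 2$ for every $v$, summing degrees gives $2\,m(G)=\sum_{v\in V(G)}\deg_G(v)\ge 2\,n(G)$, so $m(G)\ge n(G)$ and hence $E_2(G)\ge E_1(G)$. For the equality case, $E_2(G)=E_1(G)$ forces $m(G)=n(G)$, and combined with $2\,m(G)=\sum_{v}\deg_G(v)\ge 2\,n(G)$ this forces every inequality $\deg_G(v)\ge 2$ to be tight. Thus $G$ is a connected $2$-regular graph, i.e.\ a cycle. Conversely, every cycle $C_n$ with $n\ge 4$ is self-centered (each vertex has eccentricity $\lfloor n/2\rfloor$) and not complete, and it satisfies $m(C_n)=n(C_n)=n$, whence $E_2(C_n)=E_1(C_n)$. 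This settles both directions.
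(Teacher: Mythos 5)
Your proposal is correct and follows essentially the same route as the paper's proof: both reduce the comparison to $m(G)\ge n(G)$ via the observation that a self-centered non-complete graph has minimum degree at least $2$ (no pendant vertex can share its support's eccentricity), and both settle equality by noting that $m(G)=n(G)$ together with $\delta(G)\ge 2$ forces $G$ to be a cycle. Your write-up merely fills in details the paper leaves implicit, such as the explicit leaf contradiction and the $2$-regularity argument.
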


\begin{proof}
Set  $m = m(G)=m$ and $n = n(G)$.  Clearly,  $\delta(G)\geq 2$ because a pendant vertex has different eccentricity than its support vertex. Hence
    $$2m=\sum\limits_{v\in V(G)}\deg_G(v)\geq 2n,$$
that is, $m\geq n$. If $m=n$, then $G\cong C_n$ in which case $E_2(G)=n\,\lfloor\frac{n}{2}\rfloor^2=E_1(G)$. Otherwise, $m>n$ and hence $E_2(G)=m\cdot \varepsilon(G)^2>n\cdot \varepsilon(G)^2=E_1(G)$.
\end{proof}

For graphs with diameter $2$, Proposition~\ref{k2-SC} immediately implies:

\begin{corollary}\label{cor:2-SC}
If $G$ is a self-centered graph with $\diam(G) = 2$, then $E_2(G)\ge E_1(G)$. Moreover, equality holds if and only if $G\in \{C_4, C_5\}$.
\end{corollary}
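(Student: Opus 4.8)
The plan is to obtain this statement as an immediate specialization of Proposition~\ref{k2-SC}. The first step is to observe that any self-centered graph $G$ with $\diam(G) = 2$ is automatically not complete, since $K_n$ has diameter $1$. Thus $G$ meets the hypotheses of Proposition~\ref{k2-SC}, which hands us the inequality $E_2(G) \ge E_1(G)$ at once, with no further work.

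For the equality characterization, I would feed the equality clause of Proposition~\ref{k2-SC} into the diameter-$2$ constraint. That proposition tells us equality forces $G$ to be a cycle, so it remains only to determine which cycles have diameter $2$. Since $\diam(C_n) = \lfloor n/2 \rfloor$, the condition $\diam(C_n) = 2$ is equivalent to $n \in \{4, 5\}$; note that $C_3 = K_3$ has diameter $1$ and is in any case complete, so it never arises here. This pins down the equality cases as exactly $G \in \{C_4, C_5\}$. Conversely, one checks directly that both $C_4$ and $C_5$ are self-centered of diameter $2$ and satisfy $E_2 = E_1$, a fact already recorded inside the proof of Proposition~\ref{k2-SC}.

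Honestly, there is no genuine obstacle here: the corollary is bookkeeping on top of Proposition~\ref{k2-SC}, and the only computation is the elementary evaluation of the diameters of the small cycles. The one point worth stating explicitly is precisely why the diameter-$2$ assumption excludes every cycle other than $C_4$ and $C_5$, which the floor-function identity above accomplishes.
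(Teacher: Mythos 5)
Your proof is correct and follows the same route as the paper, which states this corollary as an immediate consequence of Proposition~\ref{k2-SC}. You have simply made explicit the two bookkeeping steps the paper leaves implicit (non-completeness of $G$, and $\diam(C_n)=\lfloor n/2\rfloor = 2$ forcing $n\in\{4,5\}$), both of which are handled correctly.
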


To formulate the next result, we need some preparation. A vertex $v\in V(G)$ is a \textit{universal vertex} if $\deg_G(v)=n(G)-1$. We will denote with $n'(G)$ the number of universal vertices of $G$ and with $G'$ the subgraph of $G$ induced by the non-universal vertices. In other words, $G'$ is obtained from $G$ by removing all of its universal vertices. Finally, denote by $\avd(G)$ the \textit{average degree} of graph $G$, that is, $\avd(G)=\frac{2m(G)}{n(G)}$. Then we have:

\begin{theorem}\label{thm:2-non-SC}
Let $G$ be a non-self-centered graph with $n(G)\ge 3$ and $\diam(G) = 2$. If (i) $n'(G)\geq 3$, or (ii)  $n'(G) = 2$ and $\avd(G') > 0$, or (iii) $n'(G) = 1$ and $\avd(G') > 1+\frac{1}{2(n-1)}$, then $E_1(G) < E_2(G)$. Otherwise, $E_1(G) > E_2(G)$.
\end{theorem}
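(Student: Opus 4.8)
The plan is to reduce the whole statement to one quadratic inequality in $k=n'(G)$. First I would record the structure forced by $\diam(G)=2$: every eccentricity is $1$ or $2$, and $\varepsilon_G(v)=1$ precisely when $v$ is universal. Since $G$ is not self-centered there is at least one universal vertex; and if only a single vertex were non-universal, it would be adjacent to all $k$ universal vertices and hence universal itself, a contradiction. Therefore $1\le k\le n-2$ with $n=n(G)$, and $G$ has exactly $k$ vertices of eccentricity $1$ and $n-k$ of eccentricity $2$.

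Next I would compute closed forms. At once $E_1(G)=k+4(n-k)=4n-3k$. For $E_2(G)$ I would partition $E(G)$ by the eccentricities of the endpoints: the $\binom{k}{2}$ edges between two universal vertices each contribute $1$, the $k(n-k)$ edges between a universal and a non-universal vertex each contribute $2$, and the $m(G')$ edges inside $G'$ each contribute $4$, giving $E_2(G)=\binom{k}{2}+2k(n-k)+4m(G')$. Subtracting and clearing the denominator yields
\[
2\bigl(E_2(G)-E_1(G)\bigr)=-3k^2+(4n+5)k-8n+8m(G'),
\]
and using $\avd(G')=\frac{2m(G')}{n-k}$ I would replace $8m(G')$ by $4\,\avd(G')(n-k)$, so the sign depends only on $k$, $n$, and $\avd(G')$.

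The core of the argument is part (i), $k\ge 3$, which must hold for every value of $\avd(G')$. Here I would discard the nonnegative term $4\,\avd(G')(n-k)$ and prove that $f(k)=-3k^2+(4n+5)k-8n$ is strictly positive for all $3\le k\le n-2$. Because $f$ is concave, it is enough to evaluate the endpoints: $f(3)=4(n-3)$ and $f(n-2)=n^2+n-22$, both positive once $n\ge 5$ (the range in which $k\ge 3$ is possible). Concavity then forces $f>0$ on the entire interval, hence $E_1(G)<E_2(G)$. This endpoint-plus-concavity step is the main obstacle, being the one place where the conclusion cannot simply be read off from a linear threshold.

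The two remaining cases are linear in $m(G')$. For $k=2$ the difference becomes $2\bigl(E_2(G)-E_1(G)\bigr)=8m(G')-2$, positive exactly when $m(G')\ge 1$, that is when $\avd(G')>0$ (part (ii)), and negative when $\avd(G')=0$. For $k=1$ it becomes $8m(G')-(4n-2)$; solving $8m(G')>4n-2$ and translating through $\avd(G')=\frac{2m(G')}{n-1}$ gives exactly the threshold $\avd(G')>1+\frac{1}{2(n-1)}$ of part (iii). To guarantee that the complementary ``otherwise'' cases are strict I would invoke a parity obstruction for $k=1$: equality $8m(G')=4n-2$ would force $4m(G')=2n-1$, impossible since the left side is even and the right side odd. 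Thus the difference never vanishes when $k=1$, so failing the threshold of (iii) gives $E_1(G)>E_2(G)$ strictly; combined with the case $k=2$, $\avd(G')=0$, this disposes of all remaining graphs.
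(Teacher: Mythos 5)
Your proposal is correct and follows essentially the same route as the paper: the same closed forms $E_1(G)=4n-3k$ and $E_2(G)=\binom{k}{2}+2k(n-k)+4m(G')$, the same case split on $k=n'(G)$, and the identical parity argument ruling out equality when $k=1$. The only divergence is case (i), where the paper writes the difference in the factored form $2(n'-2)(n-n')+\frac{n'(n'-3)}{2}+4m(G')$ so that positivity is immediate, while you minimize the concave quadratic $-3k^2+(4n+5)k-8n$ at the endpoints of $[3,n-2]$; both are valid, the paper's factorization being slightly more direct.
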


\begin{proof}
Set $n = n(G)$, $m = m(G)$, and $n' = n'(G)$. Then $m={n'\choose 2}+n'(n-n')+x$, where $x=m(G')$. Consequently, $E_1(G)=4(n-n') + n' = 4n - 3n'$ and $E_2(G) =  {n'\choose 2}+2n'(n-n')+4x$. Then it follows that
\begin{eqnarray}E_2(G)-E_1(G)&=&2(n'-2)(n-n')+\frac{n'(n'-3)}{2}+4x.
\label{neweq1}
\end{eqnarray}

Since $G$ is a non-self-centered graph with $\diam(G) = 2$,  we have $n'\ge 1$. We distinguish the following three cases on the value of $n'$.

Suppose first that $n^{\prime}\geq 3$. Then by~\eqref{neweq1} we get that $E_2(G)-E_1(G) \geq 2(n-n')>0$, where the last inequality holds because $G$ is not complete and thus $n > n'$.

Suppose next that $n'=2$. Using~ \eqref{neweq1} we obtain that $E_2(G)-E_1(G)=4x-1$ and therefore $E_2(G)>E_1(G)$ provided that  $\avd(G')=\frac{2x}{n-2}>0$. Otherwise we have $E_2(G)<E_1(G)$.

Assume next that $n' = 1$. Applying~\eqref{neweq1} again, we get $E_2(G)-E_1(G) = 4x-2n+1$ and (since $n'=1$) also $\avd(G')=\frac{2x}{n-1}$. Therefore $E_2(G)-E_1(G)>0$ if $\avd(G') > 1+\frac{1}{2(n-1)}$. Otherwise, we have $\avd(G')\leq 1+\frac{1}{2(n-1)}$. We claim that $\avd(G')\neq 1+\frac{1}{2(n-1)}$. Indeed, if this would be the case, then we would derive the equality $4 m(G') = 2n - 1$, which is not possible. Clearly, we have $E_1(G)<E_2(G)$ if $\avd(G') < 1+\frac{1}{2(n-1)}$.
\end{proof}

If $G\in {\cal{G}}_n^2$, then $d_G(u,v)=2$ holds for each non-adjacent vertices $u$ and $v$, hence the following result holds immediately.

\begin{proposition}\label{d-2-W}
If $n\geq 3$ and $G\in {\cal{G}}_n^2$ has $m$ edges, then $W(G)=n(n-1)-m$.
\end{proposition}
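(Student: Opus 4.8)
The plan is to evaluate $W(G)$ directly from its definition $W(G)=\sum_{\{u,v\}\subseteq V(G)} d_G(u,v)$, exploiting the dichotomy recorded just before the statement: in a graph of diameter $2$, every pair of distinct vertices lies at distance $1$ or $2$. First I would partition the $\binom{n}{2}$ unordered vertex pairs according to adjacency. A pair $\{u,v\}$ that is an edge contributes $d_G(u,v)=1$, while a pair that is a non-edge contributes $d_G(u,v)=2$.

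The counting then falls out immediately. There are exactly $m$ pairs of the first kind and $\binom{n}{2}-m$ pairs of the second kind, so summing the contributions yields
\[
W(G) = m\cdot 1 + \left(\binom{n}{2}-m\right)\cdot 2 = 2\binom{n}{2} - m = n(n-1) - m,
\]
which is precisely the claimed formula.

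The single substantive point — and the only place where the diameter hypothesis enters — is the verification that a non-adjacent pair is at distance exactly $2$: here $\diam(G)=2$ gives $d_G(u,v)\le 2$, while non-adjacency gives $d_G(u,v)\ge 2$, forcing equality. I do not expect any genuine obstacle, since once this dichotomy is in place the result is a one-line computation; the hypothesis $n\ge 3$ serves only to guarantee that graphs of diameter $2$ exist.
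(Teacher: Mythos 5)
Your proof is correct and is exactly the argument the paper intends: the paper notes that non-adjacent vertices in a diameter-$2$ graph are at distance $2$ and declares the formula immediate, while you simply spell out the resulting count $W(G)=m\cdot 1+\bigl(\binom{n}{2}-m\bigr)\cdot 2=n(n-1)-m$. No gaps; this matches the paper's approach.
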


Next we compare $W$ with $E_1$ and $E_2$ for the graphs from ${\cal{G}}_n^2$.

\begin{theorem}\label{universal}
If $n\geq 9$ and $G\in {\cal{G}}_n^2$, then  $W(G)>E_1(G)$.
\end{theorem}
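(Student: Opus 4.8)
The plan is to reduce everything to two counting formulas that are already available. By Proposition~\ref{d-2-W} we have $W(G) = n(n-1) - m$, so it suffices to express $m$ and $E_1(G)$ in a common set of parameters. Writing $n' = n'(G)$ for the number of universal vertices, every vertex of a diameter-$2$ graph has eccentricity $1$ (exactly the universal vertices) or $2$, whence $E_1(G) = n' + 4(n-n') = 4n - 3n'$, precisely as recorded in the proof of Theorem~\ref{thm:2-non-SC}. The same bookkeeping gives the edge decomposition $m = \binom{n'}{2} + n'(n-n') + x$, where $x = m(G')$ counts the edges among the non-universal vertices; this is valid also when $n' = 0$, in which case $G = G'$ and $m = x$.

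Next I would form the difference and simplify. Substituting both formulas into $W(G) - E_1(G) = n(n-1) - m - (4n - 3n')$ and collecting terms yields
\begin{equation*}
W(G) - E_1(G) = n^2 - 5n + \frac{(n')^2}{2} + \frac{7n'}{2} - n'n - x .
\end{equation*}
Everything is now controlled by a single upper bound on $x$.

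The crucial step is the bound $x \le \binom{n-n'}{2} - 1$. Indeed, $G'$ cannot be complete: if it were, then the universal vertices together with a complete $G'$ would force $G$ itself to be complete, contradicting $\diam(G) = 2$; and when $n' = 0$ this is just the statement that $G = G'$ is not complete. Since $G$ is not complete it has at least two non-universal vertices (any vertex at eccentricity $2$ and a vertex realizing its distance are both non-universal), so $n - n' \ge 2$, $\binom{n-n'}{2} \ge 1$, and the bound is meaningful. Substituting $x \le \binom{n-n'}{2} - 1$ into the displayed identity, the $(n')^2$ and $n'n$ contributions cancel and one is left with
\begin{equation*}
W(G) - E_1(G) \ge \frac{n(n-9)}{2} + 3n' + 1 .
\end{equation*}
For $n \ge 9$ the first term is nonnegative and the remaining $3n' + 1$ is strictly positive, so $W(G) > E_1(G)$.

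The only delicate point, and the reason the hypothesis is exactly $n \ge 9$, is the $-1$ in the bound on $x$. Without exploiting non-completeness one obtains only $W(G) - E_1(G) \ge \tfrac{1}{2}n(n-9) + 3n'$, which vanishes at $n = 9$, $n' = 0$; the non-completeness forced by $\diam(G) = 2$ supplies precisely the extra slack needed to keep the inequality strict at the boundary. I expect establishing this bound (and its validity in the self-centered case $n' = 0$) to be the one step requiring care; the rest is routine algebra with the two counting formulas.
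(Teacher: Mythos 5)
Your proof is correct and takes essentially the same approach as the paper's: both rely on Proposition~\ref{d-2-W}, the identity $E_1(G)=4n-3n'$, and the fact that $G\ncong K_n$ caps the edge count, arriving at the same final bound $\tfrac{n(n-9)}{2}+3n'\ (+1)\ge 0$ for $n\ge 9$. Your decomposition $m=\binom{n'}{2}+n'(n-n')+x$ together with $x\le\binom{n-n'}{2}-1$ is just an expanded form of the paper's single inequality $m\le\binom{n}{2}-1$, so after the bookkeeping the two arguments coincide.
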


\begin{proof}
Set $n' = n'(G)$ and $m = m(G)$. Then  $E_1(G)=4n-3n'$ and $m<\frac{n(n-1)}{2}$ since $G\ncong K_n$. So, by Proposition~\ref{d-2-W}, we have \begin{eqnarray*}W(G)-E_1(G)&=&n(n-5)-m+3n'\\
&>&\frac{n(n-9)}{2}+3n'\\
&\geq&0\,,
\end{eqnarray*}
the last inequality holding by the assumption $n\ge 9$.
\end{proof}

Since $E_1(G)=4n(G)$ and $E_2(G)=4m(G)$ hold for a self-centered graph $G$ with diameter $2$, Proposition~\ref{d-2-W} yields:

\begin{proposition}\label{prop:2-SC} If $G$ is a self-centered graph  of order $n$, size $m$, and diameter $2$, then the following statements hold.
\begin{enumerate}[(i)]
\item  $W(G)>E_1(G)$ if and only if  $m<n(n-5)$.
\item $W(G)>E_2(G)$ if and only if  $m<\frac{n(n-1)}{5}$.
\end{enumerate}
 \end{proposition}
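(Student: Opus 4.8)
The plan is to reduce both equivalences to elementary algebra by substituting the closed-form expressions already available for the three invariants on this class of graphs. First I would record the three facts that the preceding material supplies: since $G$ is self-centered of diameter $2$, every vertex has eccentricity $2$, so that $E_1(G)=4n$ and $E_2(G)=4m$, exactly as stated immediately before the proposition; and since $G\in{\cal{G}}_n^2$, Proposition~\ref{d-2-W} gives $W(G)=n(n-1)-m$.

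For part (i), I would rewrite the inequality $W(G)>E_1(G)$ as $n(n-1)-m>4n$ and rearrange it to $m<n^2-5n=n(n-5)$, which is precisely the claimed condition. Because every manipulation here is an equivalence, this settles both directions of the biconditional simultaneously, with no need to argue the converse separately.

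For part (ii), I would proceed identically: translate $W(G)>E_2(G)$ into $n(n-1)-m>4m$, that is $n(n-1)>5m$, and divide by $5$ to obtain $m<\frac{n(n-1)}{5}$. Each step is again reversible, so the stated equivalence follows at once.

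The argument carries no genuine obstacle: all of the content sits in the three substitutions, each of which is furnished by an earlier result, after which only reversible rearrangement of linear inequalities remains. The single point worth a moment's verification is that the first two substitutions are legitimate, i.e.\ that self-centeredness together with $\diam(G)=2$ forces $\varepsilon_G(v)=2$ for every $v$, whence $\varepsilon_G(u)\varepsilon_G(v)=4$ on each edge and $\varepsilon_G(v)^2=4$ at each vertex; but this is immediate from the definitions of eccentricity and of a self-centered graph.
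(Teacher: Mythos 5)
Your proof is correct and is essentially the paper's own argument: the paper derives the proposition in exactly this way, noting that $E_1(G)=4n$, $E_2(G)=4m$ for a self-centered graph of diameter $2$ and invoking Proposition~\ref{d-2-W} for $W(G)=n(n-1)-m$, after which both equivalences are reversible algebraic rearrangements. Nothing is missing; your explicit check that self-centeredness forces $\varepsilon_G(v)=2$ everywhere is the same (implicit) justification the paper relies on.
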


In the following we consider non-self-centered graphs $G\in{\cal{G}}_n^2$.

\begin{theorem}\label{E2-W}
If $n\ge 3$ and $G\in{\cal{G}}_n^2$ with $n'(G)>\frac{n-1}{2}$, then $E_2(G)>W(G)$.
\end{theorem}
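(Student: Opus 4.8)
The plan is to reduce the whole comparison to a single quadratic in $n' = n'(G)$ and then exploit the fact that the threshold $\frac{n-1}{2}$ appearing in the hypothesis is exactly one of its roots. First I would observe that for $n \ge 3$ the hypothesis $n' > \frac{n-1}{2} \ge 1$ forces $G$ to have a universal vertex, so $G$ is non-self-centered and its eccentricity structure is completely determined: the $n'$ universal vertices have eccentricity $1$, while each of the remaining $n - n'$ vertices is non-universal, hence misses some vertex, which lies at distance exactly $2$ since $\diam(G) = 2$; so those vertices have eccentricity $2$. This is precisely the situation analysed in the proof of Theorem~\ref{thm:2-non-SC}, so I may reuse the edge count $m = \binom{n'}{2} + n'(n-n') + x$ and the identity $E_2(G) = \binom{n'}{2} + 2n'(n-n') + 4x$, where $x = m(G')$.

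Next I would substitute $W(G) = n(n-1) - m$ from Proposition~\ref{d-2-W} and subtract. After collecting terms, this gives
\[
E_2(G) - W(G) = 2\binom{n'}{2} + 3n'(n-n') + 5x - n(n-1) = -2n'^2 + (3n-1)n' - n(n-1) + 5x,
\]
a downward-opening quadratic in $n'$ plus the nonnegative term $5x$.

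The key step, and essentially the only place where any thought is needed, is to factor this quadratic. Its discriminant simplifies to $(3n-1)^2 - 8n(n-1) = (n+1)^2$, a perfect square, so its roots are $n' = \frac{n-1}{2}$ and $n' = n$. This yields the clean form
\[
E_2(G) - W(G) = 2\left(n' - \frac{n-1}{2}\right)(n - n') + 5x.
\]
The point is that the very threshold from the hypothesis emerges as a root, which is what makes the estimate sharp and the argument transparent.

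Finally I would finish by a sign analysis of the three summands. The hypothesis gives $n' - \frac{n-1}{2} > 0$; since $G \ne K_n$ (as $\diam(G) = 2$) we have $n' \le n-1 < n$, so $n - n' > 0$; and trivially $5x \ge 0$. Therefore $E_2(G) - W(G) > 0$, as claimed. I do not anticipate any genuine obstacle here: once the eccentricities are read off and the formulas from Theorem~\ref{thm:2-non-SC} and Proposition~\ref{d-2-W} are combined, the entire content lies in noticing that $\frac{n-1}{2}$ is a root of the resulting quadratic.
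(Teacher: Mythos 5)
Your proposal is correct and follows essentially the same route as the paper: identical formulas for $m$, $E_2(G)$, and $W(G)$, the same quadratic $-2n'^2+(3n-1)n'-n(n-1)+5x$, and the same factorization $2\bigl(n'-\tfrac{n-1}{2}\bigr)(n-n')+5x$, which the paper writes as $5x-2(n'-n)\bigl(n'-\tfrac{n-1}{2}\bigr)$. The only differences are presentational: you derive the factorization via the discriminant and spell out the sign of $n-n'$, both of which the paper leaves implicit.
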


\begin{proof}
Set $n = n(G)$, $m = m(G)$, and $n' = n'(G)$. Since $n' > \frac{n-1}{2}$, $G$ is non-self-centered. As already observed in the proof of Theorem~\ref{thm:2-non-SC}, $m = n'(n-n')+x+{n'\choose 2}$, where $x = m(G')$. Then $E_2(G)={n'\choose 2}+2n'(n-n')+4x$.    Moreover, $W(G)=n(n-1)-n'(n-n')-x-{n'\choose 2}$ by Proposition~\ref{d-2-W}. Then it follows that \begin{eqnarray*}E_2(G)-W(G)&=&5x+2{n'\choose 2}+3n'(n-n')-n(n-1)\\
&=&5x-\Big[2n'^2-(3n-1)n'+n(n-1)\Big]\\
&=&5x-2(n'-n)\Big(n'-\frac{n-1}{2}\Big)\\
&>&0
\end{eqnarray*}
for $n'>\frac{n-1}{2}$, completing the argument.
\end{proof}

\begin{corollary} \label{E2-W-adv}
Let $G\in{\cal{G}}_n^2$ with $0<n'(G) \leq \frac{n-1}{2}$.
\begin{enumerate}[(i)]
\item If $\avd(G')>\frac{2}{5}(n-1-2n'(G))$, then $E_2(G)>W(G)$.
\item If $\avd(G')<\frac{2}{5}(n-1-2n'(G))$, then $E_2(G)<W(G)$.
\end{enumerate}
\end{corollary}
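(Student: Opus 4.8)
The plan is to recycle the key identity established in the proof of Theorem~\ref{E2-W} and then rewrite it in terms of the average degree of $G'$. Recall from that proof that
\begin{equation*}
E_2(G) - W(G) = 5x - 2(n'-n)\Big(n'-\frac{n-1}{2}\Big),
\end{equation*}
where $n = n(G)$, $n' = n'(G)$, and $x = m(G')$. Writing $n'-n = -(n-n')$, this becomes
\begin{equation*}
E_2(G) - W(G) = 5x + 2(n-n')\Big(n'-\frac{n-1}{2}\Big).
\end{equation*}

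The one additional observation I would make is that $G'$, being the subgraph induced by the non-universal vertices, has order $n(G') = n - n'$. Hence $\avd(G') = \frac{2x}{n-n'}$, equivalently $x = \frac{1}{2}\,\avd(G')\,(n-n')$. Substituting this into the identity above and factoring out $n-n'$ gives
\begin{equation*}
E_2(G) - W(G) = (n-n')\left[\frac{5}{2}\,\avd(G') + 2n' - (n-1)\right].
\end{equation*}

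Next I would note that $G\in{\cal{G}}_n^2$ forces $G\ncong K_n$, so $n>n'$ and the factor $n-n'$ is strictly positive. Therefore the sign of $E_2(G)-W(G)$ is exactly the sign of the bracketed expression, and the inequality $\frac{5}{2}\,\avd(G') + 2n' - (n-1) > 0$ is equivalent to $\avd(G') > \frac{2}{5}(n-1-2n')$. This establishes statement~(i), and statement~(ii) follows verbatim after reversing the inequality throughout. The hypothesis $0 < n'(G) \le \frac{n-1}{2}$ plays no role in the algebra itself; it merely ensures that the threshold $\frac{2}{5}(n-1-2n')$ is nonnegative, so that both outcomes are genuinely attainable (the complementary regime $n' > \frac{n-1}{2}$ having already been settled by Theorem~\ref{E2-W}, where the threshold would be negative and case~(i) automatic).

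I do not expect a substantive obstacle here, since the result is essentially a reformulation of Theorem~\ref{E2-W}. The only points requiring care are tracking the sign when replacing $n'-n$ by $-(n-n')$, and correctly identifying $n(G') = n-n'$ so that the passage between the edge count $x$ and $\avd(G')$ is exact; once these are handled, the equivalence drops out immediately from the positivity of $n-n'$.
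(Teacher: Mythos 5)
Your proposal is correct and follows essentially the same route as the paper: the paper's own proof likewise reuses the identity $E_2(G)-W(G)=5x+2\binom{n'}{2}+3n'(n-n')-n(n-1)$ from the proof of Theorem~\ref{E2-W} and factors it as $\frac{5}{2}(n-n')\bigl[\frac{2x}{n-n'}-\frac{2}{5}(n-1-2n')\bigr]$, which is exactly your bracketed expression up to moving the constant $\frac{5}{2}$ inside, and then reads off the sign from the positivity of $n-n'$. Your added remark that the hypothesis $0<n'(G)\le\frac{n-1}{2}$ is not needed for the algebra itself is accurate and a fair observation, but otherwise the two arguments coincide.
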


\begin{proof}
Set again $n' = n'(G)$ and $x = m(G')$. Using the argument from the proof of Theorem \ref{E2-W} we have
\begin{eqnarray*}
E_2(G)-W(G)&=&5x+2{n'\choose 2}+3n'(n-n')-n(n-1)\\
&=&5x-2(n-n')\Big(\frac{n-1}{2}-n'\Big)\\
&=&\frac{5}{2}(n-n')\Big[\frac{2x}{n-n'}-\frac{2}{5}(n-1-2n')\Big]\\
&>&0\,,
\end{eqnarray*}
where the last inequality follows by the assumption $\avd(G')=\frac{2x}{n-n'}>\frac{2}{5}(n-1-2n')$.

The above argument works also if $\avd(G')<\frac{2}{5}(n-1-2n'(G))$, the difference being only in the last estimate which becomes less than $0$.
\end{proof}

We conclude the section with the following construction.

\begin{theorem} \label{E2-W-n1}
For each integer $n'\in (0,n-2]$, there exists a graph $G\in{\cal{G}}_n^2$ with $n'(G) = n'$ such that  $E_2(G)>W(G)$.\end{theorem}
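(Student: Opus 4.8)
The plan is to realize every admissible value of $n'$ by a graph whose non-universal part $G'$ is as dense as possible, so that the edge term of $G'$ dominates the difference $E_2(G)-W(G)$. Fix $n'\in(0,n-2]$ and put $k=n-n'\ (\ge 2)$. I would build $G$ from $n'$ prescribed universal vertices together with an induced subgraph $G'$ on the remaining $k$ vertices, where $G'$ is obtained from $K_k$ by deleting a minimum edge cover. Since a minimum edge cover of $K_k$ has $\lceil k/2\rceil$ edges, this makes $G'$ a densest graph on $k$ vertices having no universal vertex, with $x=m(G')=\binom{k}{2}-\lceil k/2\rceil$.

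First I would verify that $G\in{\cal{G}}_n^2$ and that $G$ has \emph{exactly} $n'$ universal vertices. A vertex of $G'$ is adjacent in $G$ to all $n'$ universal vertices together with its $G'$-neighbours, so it is universal in $G$ if and only if it is universal in $G'$; by construction every vertex of $G'$ is touched by a deleted edge and hence misses a neighbour, so $G'$ has no universal vertex and $G$ has precisely $n'$ of them. Diameter $2$ follows because $n'\ge 1$ furnishes a vertex adjacent to all others (giving $\diam(G)\le 2$), while the non-adjacent pair guaranteed in $G'$ forces $\diam(G)\ge 2$.

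Next I would invoke the computation carried out in the proof of Theorem~\ref{E2-W}, valid for any $G\in{\cal{G}}_n^2$ with $n'$ universal vertices and $x=m(G')$, namely
\[
E_2(G)-W(G)=5x-2(n'-n)\Big(n'-\tfrac{n-1}{2}\Big)=5x+(n-n')\big(n-2(n-n')+1\big).
\]
Substituting $k=n-n'$ and the maximal value $x=\binom{k}{2}-\lceil k/2\rceil$, I would split on the parity of $k$: a short simplification gives $E_2(G)-W(G)=\tfrac12\,k(2n+k-8)$ when $k$ is even and $E_2(G)-W(G)=\tfrac12\big(k(2n+k-8)-5\big)$ when $k$ is odd. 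Using $2\le k\le n-1$ one then checks directly that each expression is strictly positive for $n\ge 4$ (the single degenerate configuration $n=3$, $n'=1$ is forced to be $P_3$, where equality holds, and is dispatched by inspection).

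The hard part is the regime $n'\le\frac{n-1}{2}$, i.e.\ large $k$, where the combinatorial term $(n-n')\big(n-2(n-n')+1\big)$ is non-positive and the whole inequality rests on $G'$ carrying enough edges; this is exactly why the densest admissible $G'$ is the right choice. The crux is the edge-cover bound $x=\binom{k}{2}-\lceil k/2\rceil\sim k^2/2$: the deficit $k\big(2k-n-1\big)$ is at most $k(k-2)$ because $n\ge k+1$, so $5x$ outgrows it with room to spare, and the factor $5$ is what tips the balance. I would finish by recording the explicit parity formulas above as the verification, noting that only the smallest values of $k$ (and $n$ near $3$) require the direct check, since there the bound is tight.
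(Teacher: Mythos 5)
Your construction is at heart the same as the paper's: the paper also realizes $n'$ by taking the non-universal part to be $K_{n-n'}$ with a near-perfect matching removed, and your identity $E_2(G)-W(G)=5x+k(n-2k+1)$, $k=n-n'$, $x=m(G')$, is exactly the computation in the proof of Theorem~\ref{E2-W}. Where you genuinely diverge is in the verification, and your version is tighter on two counts. First, the paper splits into $n'>\frac{n-1}{2}$ (handled by citing Theorem~\ref{E2-W}) and $0<n'\le\frac{n-1}{2}$ (handled by the construction together with the average-degree criterion of Corollary~\ref{E2-W-adv}, which it justifies only for $n>5$); you treat all $n'$ uniformly via the closed parity formulas, which work already for $n\ge 4$. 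Second, and more substantively, the paper removes $i\le\lfloor\frac{n-n'}{2}\rfloor$ \emph{independent} edges from $K_{n-n'}$; when $k=n-n'$ is odd, no set of independent edges covers all $k$ vertices, so one vertex of $G'$ remains universal in $G^*$ and the constructed graph has $n'+1$, not $n'$, universal vertices (moreover, after the natural repair the paper's claimed bound $\avd(G^*[V_0])\ge n-n'-2$ is off by $\frac{1}{k}$ for odd $k$). Your minimum edge cover, of size $\lceil k/2\rceil$, touches every vertex and closes exactly this parity gap, and your odd-$k$ formula $\frac{1}{2}\bigl(k(2n+k-8)-5\bigr)$ correctly absorbs the resulting loss. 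So your proof is not merely a rederivation; it repairs a real defect in the published argument.

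One correction of wording: the claim that the case $n=3$, $n'=1$ "is dispatched by inspection" should be replaced by an explicit hypothesis $n\ge 4$. For $n=3$ the unique member of $\mathcal{G}_3^2$ is $P_3$, where $E_2(P_3)=W(P_3)=4$, so the statement as printed is actually false for $n=3$ and cannot be dispatched; your argument proves the theorem precisely in the optimal range $n\ge 4$ (the paper's own proof implicitly needs $n>5$).
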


\begin{proof}
If $n'>\frac{n-1}{2}$, the result holds by Theorem~\ref{E2-W}, hence it remains to consider the cases $n'\in (0,\frac{n-1}{2}]$. Let $G\in{\cal{G}}_n^2$ and let $V'$ be the set of non-universal vertices in $G$, so that $G'$ is the subgraph of $G$ induced by $V'$. If $\avd(G')>\frac{2}{5}(n-1-2n')$, then $E_2(G)>W(G)$ from Corollary \ref{E2-W-adv}. Otherwise, $\avd(G')\leq \frac{2}{5}(n-1-2n')$.  Let $V_0^{\prime}=\{v:v\in V_0, \deg_{G'}(v)<n-n'-2\}$.  Note that $\frac{2}{5}(n-1-2n')<n-n'-2$ for $n>5$. Then $\emptyset\subset V_0^{\prime}\subseteq V'$. Now we construct a graph $G^*$ obtained by inserting some edges among the vertices in $V_0^{\prime}$ such that $G^*[V_0]$ is a graph obtained by removing $i\leq\lfloor\frac{n-n^{\prime}}{2}\rfloor$ independent edges from $K_{n-n^{\prime}}$. Then $G^*\in {\cal{G}}_n^2$ with $\avd(G^*[V_0])\geq n-n'-2>\frac{2}{5}(n-1-2n')$. The result then follows from Corollary~\ref{E2-W-adv}. \end{proof}

%%%%%%%%%%%%%%%%%%%%%%%%%%%%%%%%%%%
\section{Trees}
\label{sec:trees}
%%%%%%%%%%%%%%%%%%%%%%%%%%%%%%%%%%%

In this section we compare $W$ with $E_1$ and with $E_2$ on the class of trees. The main results assert that if the diameter of a tree is not too big, then $W\ge E_2$ and if the diameter of a tree is large, then $W <  E_1$.

\begin{theorem}\label{E2lWt}
If $T$ is a tree with $n(T)\geq 3$ and $\diam(T) \leq \frac{1+\sqrt{4n-3}}{2}$, then $E_2(T)\leq W(T)$ with equality holding if and only if $T\cong P_3$.
\end{theorem}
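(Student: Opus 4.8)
The plan is to translate the diameter bound into an arithmetic condition, estimate $E_2(T)$ from above edge by edge, and estimate $W(T)$ from below by comparison with the star. Writing $d=\diam(T)$, the hypothesis $d\le \frac{1+\sqrt{4n-3}}{2}$ is, after squaring (legitimate since $2d-1>0$), equivalent to $d(d-1)\le n-1$. This reformulation guides everything that follows: it tells me the right per-edge bound for $E_2$ should be $d(d-1)$ rather than the trivial $d^2$, and that the target lower bound for $W$ is $(n-1)^2$.

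The crux is the claim that in a tree \emph{no edge joins two vertices of eccentricity $d$}. To see this, suppose $uv\in E(T)$ with $\varepsilon(u)=\varepsilon(v)=d$, delete the edge $uv$, and let $\alpha$ and $\beta$ be the eccentricities of $u$ and $v$ within the resulting subtrees $T_u\ni u$ and $T_v\ni v$. Then $\varepsilon(u)=\max\{\alpha,1+\beta\}$ and $\varepsilon(v)=\max\{\beta,1+\alpha\}$, while the longest path through $uv$ forces $\alpha+\beta+1\le d$. A short case analysis on which term realizes $\varepsilon(u)=d$ rules out $\alpha=d$ and then collapses to $\beta=d-1$, $\alpha=0$, whence $\varepsilon(v)=\max\{d-1,1\}=d-1$, a contradiction (recall $d\ge 2$ as $n\ge 3$). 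Granting the claim, every edge $uv$ satisfies $\min\{\varepsilon(u),\varepsilon(v)\}\le d-1$ and $\max\{\varepsilon(u),\varepsilon(v)\}\le d$, hence $\varepsilon(u)\varepsilon(v)\le d(d-1)$. Summing over the $n-1$ edges and invoking the reformulated hypothesis gives $E_2(T)\le (n-1)\,d(d-1)\le (n-1)^2$.

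To close the chain I would use the classical bound $W(T)\ge (n-1)^2$, valid for every tree on $n$ vertices. The cleanest route is the edge formula $W(T)=\sum_{e\in E(T)}n_1(e)n_2(e)$, where deleting $e$ splits $T$ into parts of sizes $n_1(e)$ and $n_2(e)=n-n_1(e)$; since $n_1(e)n_2(e)\ge 1\cdot(n-1)=n-1$ for each of the $n-1$ edges, we obtain $W(T)\ge (n-1)^2$, with equality precisely when every edge separates off a single vertex, i.e. for the star. Combining the estimates yields $E_2(T)\le (n-1)d(d-1)\le (n-1)^2\le W(T)$.

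For the equality discussion, note that $E_2(T)=W(T)$ forces all three inequalities to be tight. Tightness of the last identifies $T$ as the star, whence $d=2$; tightness of the middle then requires $d(d-1)=n-1$, i.e. $n=3$. Thus the only possibility is $T\cong P_3$, and one checks directly that $E_2(P_3)=W(P_3)=4$. The single genuine difficulty is the peripheral-independence claim of the second paragraph; once it is in hand, the rest is the bookkeeping above, and it is pleasant that the sharp per-edge bound $d(d-1)$ matches the hypothesis exactly.
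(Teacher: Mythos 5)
Your proof is correct and takes essentially the same route as the paper's: the per-edge bound $\varepsilon_T(u)\varepsilon_T(v)\le d(d-1)$ (resting on the fact that no edge of a tree joins two diametrical vertices), the reformulation of the diameter hypothesis as $d(d-1)\le n-1$, and the Wiener edge formula bound $n_un_v\ge n-1$, chained together and then analyzed for equality to isolate $P_3$. The only substantive difference is that you supply an explicit deletion argument for the claim that adjacent vertices cannot both have eccentricity $d$, a point the paper asserts as ``clear.''
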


\begin{proof}
Set $n = n(T)$ and $d = \diam(T)$. Clearly, $\varepsilon_T(v)\varepsilon_T(u)\leq d(d-1)$ holds for an edge $uv\in E(T)$ with equality holding if and only if one of the vertices $u$ and $v$ is diametrical.  Since $d\leq \frac{1+\sqrt{4n-3}}{2}$, we have $d(d-1)\leq n-1$. For an edge $uv\in E(T)$ let $n_u$ and $n_v$ be the number of vertices closer to $u$ than to $v$, and closer to $v$ than to $u$, respectively. Clearly, $n_u+n_v=n$. Recall further the well-known fact going back to Wiener~\cite{Wi1947} that $W(T) = \sum\limits_{uv\in E(T)} n_un_v$. Hence for any edge $uv\in E(T)$ we have
$$\varepsilon_T(v)\varepsilon_T(u) \leq d(d-1) \leq n-1 \leq n_un_v\,,$$
which after summing over all the edges of $T$ yields $E_2(T)\leq W(T)$.  Moreover, the equality holds if and only all three equalities above hold for each edge $uv\in E(T)$. Equivalently, each edge $uv\in E(T)$ is a pendant edge in $T$, and $n-1=d(d-1)$. Only the path $P_3$ of order $3$ has these properties.
\end{proof}

We have thus seen that if the diameter of a tree is relative small, then $W\ge E_2$. On the other hand,  if  the diameter of a tree is large, then $W <  E_1$:

\begin{theorem}\label{tree-E1-W}
If $T$ is a tree with  $n(T)>3$ and $\diam(T)\geq \frac{2n}{3}$, then $W(T)<E_1(T)$.
\end{theorem}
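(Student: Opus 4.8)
The plan is to sandwich the two invariants: bound $E_1(T)$ from below and $W(T)$ from above, and then reduce the claim to a one-variable polynomial inequality that the hypothesis makes true. Write $n=n(T)$ and $d=\diam(T)$, so that $d\ge \tfrac{2n}{3}$. The guiding intuition is that a tree of large diameter is forced to be ``path-like'', so most vertices have large eccentricity (making $E_1$ large), while the Wiener index of any tree can never exceed that of the path $P_n$.

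For the lower bound on $E_1$, I would fix a diametral path $v_0 v_1 \cdots v_d$. Since $d_T(v_i,v_0)=i$ and $d_T(v_i,v_d)=d-i$, every path vertex satisfies $\varepsilon_T(v_i)\ge \max\{i,\,d-i\}$, while every vertex of $T$ (on or off the path) satisfies $\varepsilon_T(v)\ge \rad(T)\ge \lceil d/2\rceil$ because $\rad(T)\ge \diam(T)/2$. As exactly $n-d-1$ vertices lie off the path, this yields
\begin{equation*}
E_1(T)\ \ge\ \sum_{i=0}^{d}\max\{i,\,d-i\}^2\ +\ (n-d-1)\Big\lceil\tfrac{d}{2}\Big\rceil^2\ =:\ F(n,d),
\end{equation*}
where the first sum, obtained by splitting at $i=\lfloor d/2\rfloor$, has the closed form $\tfrac{7}{12}d^3+O(d^2)$.

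For the upper bound on $W$, I would invoke the classical fact that among all trees of order $n$ the path maximizes the Wiener index, so $W(T)\le W(P_n)=\binom{n+1}{3}\approx \tfrac16 n^3$. Substituting $d=\tfrac{2n}{3}$ into $F(n,d)$ produces the leading term $\big(\tfrac{14}{81}+\tfrac{1}{27}\big)n^3=\tfrac{17}{81}n^3\approx 0.21\,n^3$, comfortably above $\tfrac16 n^3\approx 0.17\,n^3$. Because $F(n,d)$ is nondecreasing in $d$ (raising $d$ by one inserts a path vertex of eccentricity $\approx d$ while deleting only an off-path vertex of eccentricity $\approx d/2$, a net gain of order $d^2$), it suffices to verify $F(n,d)>\binom{n+1}{3}$ at the smallest admissible value $d=\lceil 2n/3\rceil$; the strict polynomial gap then gives $E_1(T)\ge F(n,d)>\binom{n+1}{3}\ge W(T)$.

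The main obstacle will be the exact bookkeeping in this last step rather than any conceptual issue: the closed form of $\sum_{i=0}^d\max\{i,d-i\}^2$ together with the term $\lceil d/2\rceil^2$ splits into cases according to the parity of $d$ and the residue of $n$ modulo $3$, so the reduced inequality is piecewise polynomial. For large $n$ the $\tfrac{17}{81}$ versus $\tfrac16$ margin settles it, while the finitely many small values (starting at $n=4$) must be checked directly. A secondary point to make fully rigorous is the monotonicity of $F(n,d)$ in $d$; and if one wishes to avoid citing the extremal property $W(T)\le W(P_n)$, one could instead estimate $W(T)$ directly from the diametral-path decomposition, at the cost of a somewhat thinner margin.
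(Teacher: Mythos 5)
Your proposal is correct, but it follows a genuinely different route from the paper's proof. The paper establishes the pointwise inequality $\Tr_T(v)\le 2\varepsilon_T(v)^2$ for every vertex $v$ (strict for at least one vertex), by fixing a diametral path $P$ and splitting into cases --- $v$ diametrical, $v$ central, $v$ on $P$ but neither, $v$ off $P$ --- where the last case requires maximizing a quadratic $h(x)$ in the distance $x$ from $v$ to $P$; summing over all vertices then gives $W(T)<E_1(T)$. You instead compare the two invariants globally: the diametral path together with $\varepsilon_T(v)\ge\rad(T)\ge\lceil d/2\rceil$ gives $E_1(T)\ge F(n,d)$, while the classical extremal fact $W(T)\le W(P_n)=\binom{n+1}{3}$ (Plesn\'{i}k's paper sits, unused, in this paper's bibliography) caps the Wiener index, reducing the theorem to a polynomial inequality. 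What each buys: the paper's argument is self-contained and proves a stronger local (per-vertex) statement, at the cost of an intricate case analysis; yours is modular and conceptually shorter, at the cost of invoking an external extremal theorem. Two points complete your sketch. First, your monotonicity claim for $F(n,d)$ in $d$ is sound: incrementing $d$ (possible only while $d+1\le n-1$, i.e.\ $n-d-2\ge 0$) changes $F$ by $6r^2+8r+2+(n-d-2)(2r+1)$ when $d=2r$, and by $6(r+1)^2$ when $d=2r+1$, both positive. Second, the bookkeeping you worry about is lighter than feared and needs no sporadic small cases: writing $n=3t$, $3t+1$, $3t+2$ forces $d=\lceil 2n/3\rceil=2t$, $2t+1$, $2t+2$ respectively, and a direct computation gives $6\bigl(F(n,d)-\binom{n+1}{3}\bigr)$ equal to $7t^3+18t^2+5t$, to $7t^3+45t^2+38t+6$, and to $7t^3+60t^2+95t+42$ in the three cases --- each manifestly positive for every admissible $t\ge 1$, so the argument closes uniformly for all $n>3$.
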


\begin{proof}
Set $n = n(T)$, $d = \diam(T)$ and $r = \rad(T)$. Assume that $d$ is even. (The proof for the  case when $d$ is odd is analogous and hence omitted.) Then  $T$ has radius $r=\frac{d}{2}$ and $d>2$ holds because $d\geq \frac{2n}{3}$ and $n>3$.  From definitions, it suffices to prove that $\frac{\Tr_T(v)}{2}\leq \varepsilon_T(v)^2$ holds for  each vertex $v$ of $T$, and that for at least one vertex strict inequality holds. Let $P$ be a diametrical path in $T$ with $y,z$ as two diametrical vertices. Then  $\varepsilon_T(v)=\max\{d_T(v,y),d_T(v,z)\}$ for any vertex $v\in V(T)$. Next we bound the value of $\frac{\Tr_T(v)}{2}$ for vertices $v$ of $T$ and distinguish three cases.

Suppose first that $v$ is a diametrical vertex in $T$. Then
\begin{eqnarray*}\frac{\Tr_T(v)}{2}&\leq&\frac{1}{2}\Big[1+2+\cdots+d+(n-d-1)d\Big]\\
&=&\frac{1}{2}\Big(n-\frac{d+1}{2}\Big)d\\
&<&d^2 = \varepsilon_T(v)^2\,,
\end{eqnarray*}
where the strict inequality holds because $d\geq \frac{2n}{3}$.

Suppose next that $v$ is a central vertex in $T$. (Since $d$ is even, such a verttex is actually unique.) Then \begin{eqnarray*}
\frac{\Tr_T(v)}{2}&\leq&\frac{1}{2}\Big[2(1+2+\cdots+r)+(n-2r-1)r\Big]\\
&=&\frac{(n-r)r}{2}\\
&\le &r^2\,,
\end{eqnarray*}
where the last inequality holds since $d\geq \frac{2n}{3}$ and $d=2r$.

In the last case assume that $v$ is neither a diametrical nor the central vertex of $T$. Then $\varepsilon_T(v)=k$,  where $\frac{d+2}{2}\leq k\leq d-1$. In the first subcase assume that $v$ lies on $P$. Then
\begin{eqnarray*}
\frac{\Tr_T(v)}{2}&\leq&\frac{1}{2}\Big[1+2+\cdots+k+1+2+\cdots+d-k+(n-d-1)k\Big]\\
&=&\frac{1}{2}\Big[(n-d+\frac{k-1}{2})k+\frac{(d-k+1)(d-k)}{2}\Big]\\
&=&\frac{1}{2}\Big[(n-2d+k-1)k+\frac{d^2+d}{2}\Big].
\end{eqnarray*}
Thus it follows that
\begin{eqnarray*}
\varepsilon_T(v)^2-\frac{\Tr_T(v)}{2}&\geq&k^2-\frac{1}{2}\Big[(n-2d+k-1)k+\frac{d^2+d}{2}\Big]\\
&=&\frac{1}{2}\Big[k^2+(1+2d-n)k-\frac{d^2+d}{2}\Big]\\
&\geq&\frac{1}{2}\Big[\frac{d+2}{2}(\frac{d+2}{2}+1+2d-n)-\frac{d^2+d}{2}\Big]\\
&\geq&\frac{3d+4}{4}>0
\end{eqnarray*}
for $k\geq \frac{d+2}{2}$ with $d\geq \frac{2n}{3}$, that is, $n\leq \frac{3d}{2}$.

In the second subcase assume that $v$ is not a vertex of $P$. Let $u$  be the vertex of $P$ closest to $v$. Clearly, $u\ne y,z$. Let $d_T(y,v)=k$. Then we get $d_T(u,y)=k-x$ and $d_T(u,z)=d-k+x\leq k-x$ which implies that $1\leq x\leq k-\frac{d}{2}$. Then
\begin{eqnarray*}
\Tr_T(v)&\leq&1+2+\cdots+k+x+1+\cdots+x+d-k+x+(n-d-1-x)k\\
&=&\frac{k(k+1)}{2}+\frac{(d-k+2x)(d-k+2x+1)}{2}-\frac{x(x+1)}{2}+(n-d-1-x)k\\
&=&k^2+\frac{d^2+d}{2}+2x(d-k)+\frac{3x^2+x}{2}+(n-2d-1-x)k\,,
\end{eqnarray*}
which gives
\begin{eqnarray}
2\varepsilon_T(v)^2-\Tr_T(v)&\geq&k^2+(2d+1-n)k-\frac{d^2+d}{2}-x(2d-3k)-\frac{3x^2+x}{2}\,.
\label{e3.1}
\end{eqnarray}
Consider the function
 $$h(x)=x(2d-3k)+\frac{3x^2+x}{2}$$
defined for $x\in [1,k-\frac{d}{2}]$. Then we have $h^{\prime}(x)=2d-3k+\frac{6x+1}{2}$ which implies that $h(x)$ is an increasing function on $x\geq k-\frac{2d}{3}-\frac{1}{6}$ and a decreasing function on $x\leq k-\frac{2d}{3}-\frac{1}{6}$.
Now we determine the maximum value of $h(x)$.

\noindent
${\bf Case\,1:}$ $k\geq \frac{2d}{3}+\frac{1}{6}$. In this case
    $$h(x)\leq \max\left\{h(1),h\Big(k-\frac{d}{2}\Big)\right\}.$$
 One can easily see that
  $$h(1)=2d-3k+2\leq \Big(k-\frac{d}{2}\Big)\Big(2d-3k+\frac{3\Big(k-\frac{d}{2}\Big)+1}{2}\Big)=h\Big(k-\frac{d}{2}\Big)$$
 as $k\geq \frac{d}{2}+1$. Thus we have
   $$h(x)\leq \Big(k-\frac{d}{2}\Big)\Big(2d-3k+\frac{3\Big(k-\frac{d}{2}\Big)+1}{2}\Big)=\Big(k-\frac{d}{2}\Big)\Big(\frac{5d}{4}-\frac{3k}{2}+\frac{1}{2}\Big).$$

\noindent
${\bf Case\,2:}$ $\frac{d}{2}+1\leq k<\frac{2d}{3}+\frac{1}{6}$. In this case we have
    $$h(x)\leq h\Big(k-\frac{d}{2}\Big)=\Big(k-\frac{d}{2}\Big)\Big(\frac{5d}{4}-\frac{3k}{2}+\frac{1}{2}\Big).$$

\noindent
From (\ref{e3.1}), we obtain
\begin{eqnarray}
2\varepsilon_T(v)^2-\Tr_T(v)&\geq&k^2+(2d+1-n)k-\frac{d^2+d}{2}-\Big(k-\frac{d}{2}\Big)\Big(\frac{5d}{4}-\frac{3k}{2}+\frac{1}{2}\Big)\nonumber\\
    &=&\frac{5k^2}{2}-nk+\frac{k}{2}+\frac{d^2}{8}-\frac{d}{4}.\label{kin-2}
\end{eqnarray}

\noindent
Note that $k\geq \frac{d}{2}+1\geq \frac{n}{3}+1$ as $d\geq \frac{2n}{3}$. Since $n\leq \frac{3d}{2}$, we have that $g(x)=\frac{5x^2}{2}-nx+\frac{x}{2}$ is a strictly increasing function on $x\geq \frac{d}{2}$. From (\ref{e3.1}), we have
    $$2\varepsilon_T(v)^2-\Tr_T(v)>g\left(\frac{d}{2}\right)+\frac{d^2}{8}-\frac{d}{4}=\frac{5d^2}{8}-\frac{nd}{2}+\frac{d^2}{8}=\frac{d(3d-2n)}{4}\geq 0,$$
 which implies $\varepsilon_T(v)^2>\frac{\Tr_T(v)}{2}$.
\end{proof}

We conclude the section with the following result.

\begin{theorem}\label{tree-com}
If $T$ is a tree with $n(T)>8$, then either $W(T)>E_1(T)$ or $W(\overline{T})>E_1(\overline{T})$.
\end{theorem}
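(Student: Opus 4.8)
The plan is to split the argument on the diameter of the complement $\overline{T}$, reducing the principal case to Theorem~\ref{universal}. The structural fact I would establish first is a dichotomy for trees. Since $T$ is triangle-free, any edge $uv$ has $N_T(u)\cap N_T(v)=\emptyset$, so $\deg_T(u)+\deg_T(v)=|N_T(u)\cup N_T(v)|\le n$, with equality for some edge if and only if every vertex is $u$, $v$, or a neighbour of one of them, i.e.\ if and only if $T$ is a double star (a star being the degenerate case). If no edge attains equality, then for each edge $uv$ of $T$ the vertices $u,v$ admit a common non-neighbour in $T$, hence a common neighbour in $\overline{T}$; as vertices non-adjacent in $T$ are adjacent in $\overline{T}$, this forces $\diam(\overline{T})=2$. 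Thus, apart from double stars, $\overline{T}$ always has diameter $2$.

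In the principal case, where $T$ is \emph{not} a double star, I would simply invoke Theorem~\ref{universal}: since $\diam(\overline{T})=2$ and $n(\overline{T})=n>8$ gives $n\ge 9$, we get $\overline{T}\in\mathcal{G}_n^2$ and therefore $W(\overline{T})>E_1(\overline{T})$, which already yields the conclusion. No further work is needed here, so the entire burden falls on the double stars.

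For the remaining case I would treat a double star $S_{a,b}$ (with $a+b=n-2$, $a\ge b\ge 0$) by explicit computation. In $T$ the two central vertices have eccentricity $2$ and all $n-2$ leaves have eccentricity $3$, so $E_1(T)=9(n-2)+8=9n-10$; using Wiener's identity $W(T)=\sum_{uv\in E(T)}n_un_v$ (as in the proof of Theorem~\ref{E2lWt}) the pendant edges contribute $(n-2)(n-1)$ and the central edge contributes $(a+1)(b+1)$, giving the compact form $W(T)=(n-1)^2+ab$. For the star ($b=0$) the complement is disconnected, but $W(T)=(n-1)^2>4n-3=E_1(T)$ directly. For a proper double star the complement is connected with $\diam(\overline{T})=3$: there the $n-2$ leaves have eccentricity $2$ and the two central vertices have eccentricity $3$, so $E_1(\overline{T})=4(n-2)+18=4n+10$, while a distance count gives $W(\overline{T})=\binom{n-2}{2}+3(n-1)$. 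It then remains to check that for every admissible $(a,b)$ at least one of $W(T)>E_1(T)$ or $W(\overline{T})>E_1(\overline{T})$ holds.

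The hard part is precisely this double-star case: both differences are quadratic in $n$ with small linear corrections, so the two comparisons are genuinely tight near the threshold, and it is this family that pins down the hypothesis $n>8$. Concretely, $W(\overline{T})-E_1(\overline{T})=\tfrac12(n^2-7n-20)$ is positive for $n\ge 10$ independently of $(a,b)$, while for the borderline $n$ the relation $W(T)-E_1(T)=n^2-11n+11+ab$ must be used, whose sign depends on the product $ab$. I would therefore organize the double-star argument as a parametric estimate in $ab$ together with a short finite check at the smallest admissible orders, taking care that the two inequalities between them cover all splittings $a+b=n-2$.
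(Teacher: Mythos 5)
Your overall strategy is sound and in its main case coincides with the paper's: trees that are not (double) stars have $\diam(\overline{T})=2$, so Theorem~\ref{universal} applied to $\overline{T}$ (with $n\ge 9$) settles everything at once. The paper reaches the same split by cases $\diam(T)=2,3,\ge 4$, while you reach it via the degree-sum dichotomy; these are equivalent, since trees of diameter at most $3$ are exactly the stars and double stars. Your explicit formulas are also correct: for a proper double star, $E_1(T)=9n-10$, $W(T)=(n-1)^2+ab$, $E_1(\overline{T})=4n+10$, $W(\overline{T})=\binom{n-2}{2}+3(n-1)$, hence $W(T)-E_1(T)=n^2-11n+11+ab$ and $W(\overline{T})-E_1(\overline{T})=\tfrac{1}{2}(n^2-7n-20)$.

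The genuine gap is the step you defer: ``it then remains to check that for every admissible $(a,b)$ at least one of the two inequalities holds.'' That check fails, and cannot be repaired, at $n=9$. Take the double star with $(a,b)=(6,1)$: your own formulas give $W(T)-E_1(T)=81-99+11+6=-1<0$ and $W(\overline{T})-E_1(\overline{T})=\tfrac{1}{2}(81-63-20)=-1<0$, and a direct count confirms $W(T)=70$, $E_1(T)=71$, $W(\overline{T})=45$, $E_1(\overline{T})=46$. So this tree satisfies neither inequality, and the theorem as stated is false for $n=9$; your argument proves it only for $n\ge 10$, where $n^2-7n-20>0$ disposes of all proper double stars and the remaining cases go through. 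The reason the paper does not notice this is an arithmetic slip in its $d=3$ case: it evaluates $(n-1)^2+(n-2)-1$ as $n^2-n+2$, whereas the correct value is $n^2-n-2$, and $n^2-n-2>9n-10$ requires $n\ge 10$, not $n\ge 9$. In short, your proposal is incomplete at exactly the ``hard part'' you identified, but carrying out that check honestly would have produced a counterexample to the published statement (which should be stated for $n(T)>9$) rather than a proof of it.
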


\begin{proof}
Set $n = n(T)$ and $d = \diam(T)$. If $d=2$, then the assertion follows from Theorem~\ref{universal}. If $d=3$, then $T$ is a double star, where the two non-leaves of $T$ are adjacent to $n'$ and $n-2-n'$ leaves, respectively, where $1\leq n'\leq \lfloor\frac{n-2}{2}\rfloor$. It follows that
\begin{eqnarray*}
W(T)&=&n-1+2\Big[n'+n-2-n'+{n' \choose 2}+{n-n'-2\choose 2}\Big]+3n'(n-2-n')\\
&=&3n-5+n'(n'-1)+(n-2-n')(n-3-n')+3n'(n-2-n')\\
&=&(n-1)^2+(n-2)n'-n'^2\\
&\geq&(n-1)^2+(n-2)-1\\
&=&n^2-n+2\\
&>&9n-10=E_1(T)\,,
\end{eqnarray*}
that is, $W(T)>E_1(T)$ for $n>8$.

The last case to consider is when $d\geq 4$. From a well known fact that $\diam(\overline{G}) = 2$ if $\diam(G)\ge 3$  (see~\cite[Exercise 1.6.12]{bondymurty1976}), we have $\diam(\overline{T}) = 2$. Then $\overline{T}$ is a self-centered graph of order $n>8$ with $m(\overline{T})={n\choose 2}-(n-1)=\frac{(n-1)(n-2)}{2}$ and the assertion follows by Corollary~\ref{cor:2-SC}.
\end{proof}

%%%%%%%%%%%%%%%%%%%%%%%%%%%%%%%%%%%
\section{Universally diametrical graphs}
\label{sec:UD}
%%%%%%%%%%%%%%%%%%%%%%%%%%%%%%%%%%%

We say that a graph $G$ is \textit{universally diametrical} (UD for short) if  there exist diametrical vertices $u$ and $v$  of $G$, such that $\Ecc_G(w)\cap\{u,v\}\neq\emptyset$ for any vertex $w\in V(G)\setminus \{u,v\}$,  that is, at least one of $u$ and $v$ is eccentric to $w$. We further say that the vertices $u$ and $v$ form a \textit{universally diametrical pair} in $G$.  A universally diametrical graph $G$ is called a \textit{$k$-$(u,v)$-universally diametrical} (or $k$-$(u,v)$-UD for simplicity) graph if $d_G(u,v)=\diam(G)=k$.

Obviously, any tree is a UD graph. A sporadic example of a UD graph is shown  in Figure \ref{F0}.  Let further $A_k$, $k\ge 1$,  be the graph obtained by attaching $k$ pendant vertices to each of two diametrical vertices of $C_{4}$. Then $A_k$ is a $4$-UD graph for each $k\ge 1$. Note also that the $d$-dimensional hypercube $Q_d$ is a $d$-UD graph in which each pair of diametrical vertices form a universally diametrical pair.

\begin{figure}[ht!]
\begin{center}
\begin{tikzpicture}[scale=0.8,style=thick]
\def\vr{3pt}
%% vertices defined %%
\path (0,0) coordinate (a1);
\path (1,0) coordinate (a2);
\path (2,0) coordinate (a3);
\path (2,1) coordinate (b3);
\path (3,0) coordinate (a4);
\path (4,0) coordinate (a5);
\path (5,0) coordinate (a6);
\path (5,1) coordinate (b6);
\path (6,0) coordinate (a7);
\path (7,0) coordinate (a8);
\path (8,0) coordinate (a9);
\path (9,0) coordinate (a10);
\path (10,0) coordinate (a11);
\path (11,0) coordinate (a12);
\path (8,-1) coordinate (b9);
\path (10,1) coordinate (b11);

%% edges %%
\draw (a1) -- (a2) -- (a3) -- (a4) -- (a5) --(a6) -- (a7) -- (a8) -- (a9) -- (a10) -- (a11) --(a12);
\draw (a2) -- (b3) -- (a4);
\draw (a3) -- (b3);
\draw (a5) -- (b6) -- (a7);
\draw (a6) -- (b6);
\draw (a8) -- (b9) -- (a10);
\draw (a9) -- (b9);
\draw (a11) -- (b11) -- (a12);
%% vertices %%
\draw (a1)  [fill=white] circle (\vr);
\draw (a2)  [fill=white] circle (\vr);
\draw (a3)  [fill=white] circle (\vr);
\draw (b3)  [fill=white] circle (\vr);
\draw (a4)  [fill=white] circle (\vr);
\draw (a5)  [fill=white] circle (\vr);
\draw (a6)  [fill=white] circle (\vr);
\draw (b6)  [fill=white] circle (\vr);
\draw (a7)  [fill=white] circle (\vr);
\draw (a8)  [fill=white] circle (\vr);
\draw (a9)  [fill=white] circle (\vr);
\draw (a10)  [fill=white] circle (\vr);
\draw (a11)  [fill=white] circle (\vr);
\draw (a12)  [fill=white] circle (\vr);
\draw (b9)  [fill=white] circle (\vr);
\draw (b11)  [fill=white] circle (\vr);
\draw (0,-0.5) node {$u$};
\draw (11,-0.5) node {$v$};

\end{tikzpicture}
\end{center}
\caption{ $11$-$(u,v)$-UD graph}
\label{F0}
\end{figure}
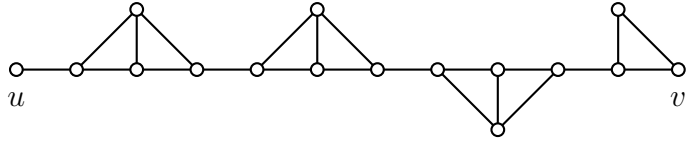

To prove the next first main result of this section, the following lemma will be useful.

\begin{lemma}\label{tr-ec}
Let $G$ be a connected graph with $v\in V(G)$. Then $\varepsilon(G)-\varepsilon_G(v)\geq \Tr_G(v)$ with equality holding if and only if $\varepsilon_G(u)=d_G(v,\,u)$ for any vertex $u\in V(G)\setminus \{v\}$.
\end{lemma}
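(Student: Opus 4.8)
The plan is to observe that the claimed inequality decouples into a vertex-by-vertex comparison, so no real machinery is needed. First I would rewrite both sides of the inequality in a form that makes the summation range agree. Since $\varepsilon(G)=\sum_{w\in V(G)}\varepsilon_G(w)$ by definition of the total eccentricity, subtracting the single term $\varepsilon_G(v)$ gives
$$
\varepsilon(G)-\varepsilon_G(v)=\sum_{u\in V(G)\setminus\{v\}}\varepsilon_G(u)\,.
$$
On the other hand, because $d_G(v,v)=0$, the transmission satisfies
$$
\Tr_G(v)=\sum_{u\in V(G)}d_G(v,u)=\sum_{u\in V(G)\setminus\{v\}}d_G(v,u)\,.
$$
Thus the lemma reduces to showing that $\sum_{u\neq v}\varepsilon_G(u)\geq\sum_{u\neq v}d_G(v,u)$, where both sums run over the same index set $V(G)\setminus\{v\}$.

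Next I would compare the two sums term by term. For any fixed $u\in V(G)\setminus\{v\}$, the eccentricity $\varepsilon_G(u)=\max_{w\in V(G)}d_G(u,w)$ is a maximum of distances from $u$, and in particular this maximum is at least the distance $d_G(u,v)=d_G(v,u)$ achieved at $w=v$. Hence $\varepsilon_G(u)\geq d_G(v,u)$ for every $u\neq v$, and summing these inequalities over all $u\in V(G)\setminus\{v\}$ yields the desired $\varepsilon(G)-\varepsilon_G(v)\geq\Tr_G(v)$.

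Finally, for the equality characterization I would note that a sum of nonnegative quantities $\varepsilon_G(u)-d_G(v,u)$ equals zero precisely when each summand vanishes. Therefore equality holds if and only if $\varepsilon_G(u)=d_G(v,u)$ for every vertex $u\in V(G)\setminus\{v\}$, which is exactly the stated condition (equivalently, $v$ is an eccentric vertex of every other vertex, i.e.\ $v\in\Ecc_G(u)$ for all $u\neq v$). I do not anticipate a genuine obstacle here; the only point requiring a little care is the bookkeeping in the first paragraph that aligns the two summation ranges so that the term-by-term estimate applies cleanly.
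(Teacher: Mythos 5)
Your proof is correct and follows essentially the same route as the paper's: both expand $\varepsilon(G)-\varepsilon_G(v)$ as $\sum_{u\neq v}\varepsilon_G(u)$, compare term by term with $d_G(v,u)$ using the fact that the eccentricity of $u$ dominates its distance to $v$, and characterize equality by the vanishing of each nonnegative summand. Your write-up is in fact a bit more careful than the paper's (which has some inconsistent variable names), but the mathematical content is identical.
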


\begin{proof}
From definitions, we have \begin{eqnarray*}
\varepsilon(G)-\varepsilon_G(v)&=&\sum\limits_{w\in V(G)\setminus\{v\}}\varepsilon_G(u)\\
              &\geq&\sum\limits_{u\in V(G)\setminus\{v\}}d_G(v,u)\\
              &=&\Tr_G(v)
 \end{eqnarray*} with equality holding if and only if $\varepsilon_G(w)=d_G(v,w)$ for any $u\in V(G)\setminus \{v\}$.
 \end{proof}

In the following, let $f(x)=2x^2+9x+6$ with $x>0$.

\begin{theorem}\label{uni-pend}
Let $G$ be a $d$-$(u,v)$-UD graph of order $n$, where $f(d)\geq n$. Let $G^*$ be the graph obtained from $G$ by attaching a pendant vertex $u^{\prime}$ to $u$ and a pendant vertex $v^{\prime}$ to $v$. If $E_1(G)>W(G)$, then $E_1(G^*)>W(G^*)$.
\end{theorem}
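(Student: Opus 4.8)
The plan is to express both $E_1(G^*)$ and $W(G^*)$ exactly in terms of invariants of $G$, and then to reduce the desired inequality to the hypothesis $f(d)\ge n$ by means of Lemma~\ref{tr-ec}. The first and crucial step is to pin down all eccentricities in $G^*$. Since $d_G(u,v)=\diam(G)=d$, attaching the pendants $u'$ and $v'$ creates the pair $u',v'$ at distance $d+2$, and one checks that $\varepsilon_{G^*}(u')=\varepsilon_{G^*}(v')=d+2$. For a vertex $w\in V(G)$ one has $\varepsilon_{G^*}(w)=\max\{\varepsilon_G(w),\,d_G(w,u)+1,\,d_G(w,v)+1\}$, and here the UD hypothesis is exactly what is needed: $\max\{d_G(w,u),d_G(w,v)\}=\varepsilon_G(w)$ holds for \emph{every} $w\in V(G)$ (for $w\in\{u,v\}$ by diametrality of the pair, and for the remaining $w$ by the defining property of the universally diametrical pair). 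Consequently $\varepsilon_{G^*}(w)=\varepsilon_G(w)+1$ for every $w\in V(G)$.

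With the eccentricities in hand, I would compute
$$E_1(G^*)=\sum_{w\in V(G)}\big(\varepsilon_G(w)+1\big)^2+2(d+2)^2=E_1(G)+2\varepsilon(G)+n+2(d+2)^2,$$
and, splitting the pairs of $V(G^*)$ into pairs inside $V(G)$, pairs containing exactly one of $u',v'$, and the single pair $\{u',v'\}$, and using $d_{G^*}(u',z)=d_G(u,z)+1$ and $d_{G^*}(v',z)=d_G(v,z)+1$ for $z\in V(G)$, I would compute
$$W(G^*)=W(G)+\big(n+\Tr_G(u)\big)+\big(n+\Tr_G(v)\big)+(d+2).$$
Subtracting these two identities gives
$$E_1(G^*)-W(G^*)=\big(E_1(G)-W(G)\big)+2\varepsilon(G)-\Tr_G(u)-\Tr_G(v)-n+2(d+2)^2-d-2.$$

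The remaining task is to show that the trailing group of terms is nonnegative; combined with the hypothesis $E_1(G)-W(G)>0$ this yields $E_1(G^*)-W(G^*)>0$ at once. Here Lemma~\ref{tr-ec} does the work: since $\varepsilon_G(u)=\varepsilon_G(v)=d$, it gives $\Tr_G(u)\le\varepsilon(G)-d$ and $\Tr_G(v)\le\varepsilon(G)-d$, hence $\Tr_G(u)+\Tr_G(v)\le 2\varepsilon(G)-2d$. Substituting this bound,
$$2\varepsilon(G)-\Tr_G(u)-\Tr_G(v)-n+2(d+2)^2-d-2\ \ge\ 2d-n+2(d+2)^2-d-2=2d^2+9d+6-n=f(d)-n\ \ge\ 0.$$

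The main obstacle is really the eccentricity bookkeeping of the first paragraph: obtaining the clean identity $\varepsilon_{G^*}(w)=\varepsilon_G(w)+1$ for \emph{all} $w\in V(G)$ is what makes the rest collapse, and it is precisely the point where the universally diametrical hypothesis (not merely the existence of a single diametrical vertex) is indispensable—if only one of $u,v$ failed to be eccentric to some $w$, that vertex's eccentricity might not rise by one and the exact cancellation would break. Once the identity is in place, the rest is an exact arithmetic reduction in which the otherwise mysterious polynomial $f(d)=2d^2+9d+6$ emerges automatically as $2(d+2)^2+d-2$, matching the stated threshold $f(d)\ge n$ term for term.
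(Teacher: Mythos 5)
Your proof is correct and follows essentially the same route as the paper: the same exact identities $E_1(G^*)=E_1(G)+2\varepsilon(G)+n+2(d+2)^2$ and $W(G^*)=W(G)+\Tr_G(u)+\Tr_G(v)+2n+d+2$, followed by Lemma~\ref{tr-ec} applied at $u$ and $v$ (where $\varepsilon_G(u)=\varepsilon_G(v)=d$) to reduce everything to $f(d)-n\ge 0$. The only cosmetic differences are that you compute $W(G^*)$ by splitting pairs rather than by summing transmissions, and you spell out the eccentricity bookkeeping $\varepsilon_{G^*}(w)=\varepsilon_G(w)+1$ more explicitly than the paper does.
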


\begin{proof}
Since $G$ is a UD graph, $G^*$ is also a UD graph in which $u^{\prime}, v^{\prime}$ form a universally diametrical pair. Therefore we have $\varepsilon_{G^*}(u^{\prime})=\varepsilon_{G^*}(v^{\prime})=d+2$ and $\varepsilon_{G^*}(w)=\varepsilon_{G}(w)+1$ for any vertex $w\in V(G)$. Then
\begin{eqnarray*}
E_1(G^*)&=&2(d+2)^2+\sum\limits_{w\in V(G)}(\varepsilon_G(w)+1)^2\\
              &=&E_1(G)+2\varepsilon(G)+n+2(d+2)^2.
 \end{eqnarray*}
 Moreover, from the structure of $G^*$, we have
 \begin{eqnarray*}
\Tr_{G^*}(u^{\prime})&=&d+2+\sum\limits_{w\in V(G)}(d_G(u,w)+1)\\
              &=&\Tr_G(u)+n+d+2.
 \end{eqnarray*}
 Similarly, we have $\Tr_{G^*}(v^{\prime})=\Tr_G(v)+n+d+2$. Note that $\Tr_{G^*}(w)=\Tr_G(w)+d_G(u,w)+d_G(v,w)+2$ for any vertex $w\in V(G)$.  It follows that
 \begin{eqnarray*}
2W(G^*)&=&\Tr_{G^*}(u^{\prime})+\Tr_{G^*}(v^{\prime})+\sum\limits_{w\in V(G)}\Tr_{G^*}(w)\\
              &=&\Tr_G(u)+\Tr_G(v)+2n+2(d+2)+ \\
              & & \sum\limits_{w\in V(G)}\Big(\Tr_G(w)+d_G(u,w)+d_G(v,w)+2\Big)\\
              &=&2[\Tr_G(u)+\Tr_G(v)]+4n+2(d+2)+2W(G),
 \end{eqnarray*}
 that is, $W(G^*)=W(G)+\Tr_G(u)+\Tr_G(v)+2n+d+2$. Note that $\varepsilon_G(u)=\varepsilon_G(v)=d$ for the universally diametrical pair $\{u,v\}$ in $G$. Combining Lemma~\ref{tr-ec} with the assumption that $E_1(G)> W(G)$ and $2d^2+9d+6\geq n$,  we have
 \begin{eqnarray*}
E_1(G^*)-W(G^*)&>&2\varepsilon(G)-\Tr_G(u)-\Tr_G(v)+2(d+2)^2-(d+2)-n\\
              &\geq&2d^2+9d+6-n\\
              &\geq &0,
 \end{eqnarray*}
 finishing the proof of the theorem.
 \end{proof}

In the following we will make use of the {\em eccentric connectivity index}~\cite{SGM1997} of a graph $G$ defined as $\xi ^{c}(G)= \sum_{v \in V(G)}\deg_G(v)\varepsilon_G(v)$, see also~\cite{IG2011,XAD2017, XuLi2016}. The next result is parallel to Theorem~\ref{uni-pend}, but now we compare $E_2$ with $E_1$.

\begin{theorem}\label{uni-pend-e}
Let $G$ be a $d$-$(u,v)$-UD graph of order $n$, size $m\geq n+2d+4$, and $\delta(G)\ge 2$.  If $G^*$ is defined just  as in Theorem~\ref{uni-pend} and  $E_2(G)> E_1(G)$, then $E_2(G^*)>E_1(G^*)$.
\end{theorem}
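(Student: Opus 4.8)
The plan is to mirror the bookkeeping of Theorem~\ref{uni-pend}, but now track $E_2$ alongside $E_1$ and isolate the surplus terms as a sum over vertices that the two hypotheses $\delta(G)\ge 2$ and $m\ge n+2d+4$ are designed to control. I would reuse verbatim the eccentricity identities already recorded in the proof of Theorem~\ref{uni-pend}: $\varepsilon_{G^*}(u')=\varepsilon_{G^*}(v')=d+2$ and $\varepsilon_{G^*}(w)=\varepsilon_G(w)+1$ for every $w\in V(G)$; in particular, since $\varepsilon_G(u)=\varepsilon_G(v)=d$ for the universally diametrical pair, we get $\varepsilon_{G^*}(u)=\varepsilon_{G^*}(v)=d+1$. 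The formula $E_1(G^*)=E_1(G)+2\varepsilon(G)+n+2(d+2)^2$ is likewise available directly from that proof.

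First I would compute $E_2(G^*)$ edge by edge. The two new pendant edges $u'u$ and $v'v$ each contribute $(d+2)(d+1)$. Each old edge $ab\in E(G)$ contributes $(\varepsilon_G(a)+1)(\varepsilon_G(b)+1)=\varepsilon_G(a)\varepsilon_G(b)+\varepsilon_G(a)+\varepsilon_G(b)+1$. Summing over $E(G)$, the product term gives $E_2(G)$, the constant gives $m$, and the cross term gives
$$\sum_{ab\in E(G)}\big(\varepsilon_G(a)+\varepsilon_G(b)\big)=\sum_{w\in V(G)}\deg_G(w)\,\varepsilon_G(w)=\xi^c(G),$$
the eccentric connectivity index. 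Hence $E_2(G^*)=E_2(G)+\xi^c(G)+m+2(d+2)(d+1)$.

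Next I would subtract. Using $2(d+2)(d+1)-2(d+2)^2=-2(d+2)$, the difference collapses to
$$E_2(G^*)-E_1(G^*)=\big[E_2(G)-E_1(G)\big]+\big[\xi^c(G)-2\varepsilon(G)\big]+\big[m-n-2(d+2)\big].$$
The heart of the argument, and the step where both hypotheses are spent, is showing the last two brackets are nonnegative. For the middle one, rewrite $\xi^c(G)-2\varepsilon(G)=\sum_{w\in V(G)}(\deg_G(w)-2)\,\varepsilon_G(w)$, which is $\ge 0$ precisely because $\delta(G)\ge 2$. For the last one, $m\ge n+2d+4=n+2(d+2)$ gives $m-n-2(d+2)\ge 0$. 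Since $E_2(G)>E_1(G)$ by assumption, the first bracket is strictly positive while the other two are nonnegative, so $E_2(G^*)>E_1(G^*)$.

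I expect the only genuine subtlety to be the clean identification of the cross term with $\xi^c(G)$ and the recasting $\xi^c(G)-2\varepsilon(G)=\sum_{w}(\deg_G(w)-2)\,\varepsilon_G(w)$; once that rewriting is in hand, the minimum-degree hypothesis does exactly the work it was introduced for and the edge-count hypothesis handles the remaining constant, so no further estimate requires care.
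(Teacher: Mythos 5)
Your proposal is correct and follows essentially the same route as the paper: both compute $E_2(G^*)=E_2(G)+\xi^c(G)+m+2(d+2)(d+1)$ edge by edge, reuse $E_1(G^*)=E_1(G)+2\varepsilon(G)+n+2(d+2)^2$ from Theorem~\ref{uni-pend}, and close by combining $\xi^c(G)\ge 2\varepsilon(G)$ (from $\delta(G)\ge 2$) with $m\ge n+2(d+2)$. Your explicit rewriting $\xi^c(G)-2\varepsilon(G)=\sum_{w}(\deg_G(w)-2)\varepsilon_G(w)$ is just a slightly more detailed justification of an inequality the paper states directly.
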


\begin{proof}
By a similar reasoning as that in the proof of Theorem \ref{uni-pend}, we have \begin{eqnarray*}
E_2(G^*)&=&2(d+2)(d+1)+\sum\limits_{uv\in E(G)}(\varepsilon_G(u)+1)(\varepsilon_G(v)+1)\\
        &=&2(d+2)(d+1)+\sum\limits_{uv\in E(G)}\varepsilon_G(u)\varepsilon_G(v)+\sum\limits_{uv\in E(G)}[\varepsilon_G(u)+\varepsilon_G(v)]+m\\
              &=&2(d+2)(d+1)+E_2(G)+m+\xi^c(G).
 \end{eqnarray*}
Note that  $E_1(G^*)=E_1(G)+2\varepsilon(G)+n+2(d+2)^2$ (see the proof of Theorem \ref{uni-pend}) and $\xi^c(G)\geq 2\varepsilon(G)$ since $\delta(G)\ge 2$. Then the assumptions $E_2(G)>E_1(G)$ and $m\geq n+2d+4$ give
$E_2(G^*)-E_1(G^*) > m-n-2(d+2)+\xi^c(G)-2\varepsilon(G) \geq 0$.
\end{proof}

Theorem~\ref{uni-pend} can be  extended as follows.

 \begin{corollary}\label{pend-path}
 Let $G$ be a $d$-$(u,v)$-UD graph of order $n$ with  $f(d+2\ell-2)\geq n+2\ell-2$. Let $G^{\ell*}$ be the graph obtained from $G$ by attaching a pendant path of length $\ell\geq 1$ to each of $u$ and $v$. If $E_1(G)>W(G)$, then $E_1(G^{\ell*})>W(G^{\ell*})$.
 \end{corollary}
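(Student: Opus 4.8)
The plan is to induct on $\ell$, attaching the pendant paths one layer at a time and invoking Theorem~\ref{uni-pend} at each step. Throughout, write $G^{(k)*}$ for the graph obtained from $G$ by attaching a pendant path of length $k$ to each of $u$ and $v$, so that $G^{(0)*}=G$ and $G^{(\ell)*}=G^{\ell*}$. First I would record the structural fact that each $G^{(k)*}$ is again a UD graph: its two outer path-endpoints form a universally diametrical pair, $\diam(G^{(k)*})=d+2k$, and $n(G^{(k)*})=n+2k$. This holds because lengthening both pendant paths by one unit raises the eccentricity of every old vertex by exactly one on each side, so that one of the two new endpoints stays eccentric to every other vertex. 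The base case $\ell=1$ is then exactly Theorem~\ref{uni-pend}, since $f(d+2\ell-2)\ge n+2\ell-2$ collapses to $f(d)\ge n$ and $G^{\ell*}=G^{*}$.

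For the inductive step I would peel off the outermost layer, viewing $G^{\ell*}=(G^{(\ell-1)*})^{*}$ in the notation of Theorem~\ref{uni-pend}: the induction hypothesis applied to $G$ with path-length $\ell-1$ would give $E_1(G^{(\ell-1)*})>W(G^{(\ell-1)*})$, and then Theorem~\ref{uni-pend} applied to $G^{(\ell-1)*}$ yields the conclusion, its numerical requirement $f(\diam(H))\ge n(H)$ reading precisely $f(d+2\ell-2)\ge n+2\ell-2$, the hypothesis supplied. A cleaner alternative, avoiding nested appeals to the corollary, is to telescope the per-step estimate already present in the proof of Theorem~\ref{uni-pend}: writing $\Delta(H)=E_1(H)-W(H)$, that proof in fact establishes (via Lemma~\ref{tr-ec} and the fact that $\varepsilon_H(u)=\varepsilon_H(v)=\diam(H)$ for a universally diametrical pair) the increment bound $\Delta(H^{*})\ge \Delta(H)+f(\diam(H))-n(H)$. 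Iterating along $G=G^{(0)*}\to\cdots\to G^{(\ell)*}$ gives $\Delta(G^{\ell*})\ge \Delta(G)+\sum_{k=0}^{\ell-1}[f(d+2k)-(n+2k)]$, and one then argues the right-hand side is positive using $\Delta(G)>0$.

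The step I expect to be the main obstacle is controlling the numerical condition at every intermediate stage, not merely at the outermost one. Both routes above require $f(d+2k)\ge n+2k$ to be available across the relevant range of $k$, so the delicate point is the behaviour of the gap $f(d+2k)-(n+2k)$ as $k$ varies. Since $f(x)=2x^2+9x+6$ is quadratic while the order grows only linearly, the short computation $f(x+2)-f(x)=8x+26$ shows this gap strictly increasing in $k$ (its increment from $k$ to $k+1$ is $8(d+2k)+24>0$); hence the binding instance is the smallest one, at $k=0$. The crux of the write-up is therefore to calibrate the single supplied inequality against the conditions needed at the earlier stages, making the monotonicity carry all of them so that the telescoped sum is seen to be nonnegative; getting this reconciliation right, rather than the bookkeeping for $E_1$, $W$, and $\Tr$, is where the real work lies.
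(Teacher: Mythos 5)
Your plan is the same as the paper's: build $G^{\ell*}$ from $G$ by $\ell$ successive applications of the single-pendant construction and invoke Theorem~\ref{uni-pend} once per layer (the paper's proof is exactly this induction, and your telescoped variant $\Delta(G^{\ell*})\ge\Delta(G)+\sum_{k=0}^{\ell-1}\left[f(d+2k)-(n+2k)\right]$ is a faithful repackaging of it), and your structural bookkeeping is correct: each $G^{(k)*}$ is UD with the two path-ends as a universally diametrical pair, order $n+2k$ and diameter $d+2k$. The problem is your final paragraph, where you locate the genuine difficulty and then claim it is resolved by monotonicity; the monotonicity runs the wrong way. You correctly compute that $g(k)=f(d+2k)-(n+2k)$ is strictly increasing in $k$, so the binding condition is $g(0)=f(d)-n\ge 0$. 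But the hypothesis the corollary supplies is $g(\ell-1)\ge 0$, the \emph{weakest} member of this increasing chain: an increasing sequence whose last term is nonnegative can perfectly well have $g(0)<0$, so the supplied inequality implies none of the conditions needed at stages $k<\ell-1$, and both of your routes (the nested induction, whose induction hypothesis needs $f(d+2\ell-4)\ge n+2\ell-4$, and the telescoped sum, whose early terms may be negative) stall already at the first layer. To be fair, the paper's own proof has the identical lacuna: it verifies the hypothesis of Theorem~\ref{uni-pend} only for the outermost application, to $G^{(\ell-1)*}$, and dismisses the rest with ``repeatedly applying''.

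If you want to close the gap rather than reproduce it, you need an input beyond monotonicity. The cleanest one: the inequality $E_1(H)>W(H)$ by itself forces the numerical condition, because $E_1(H)\le n(H)\,\diam(H)^2$ while $W(H)\ge\binom{n(H)}{2}$, so $E_1(H)>W(H)$ yields $n(H)<2\,\diam(H)^2+1<f(\diam(H))$. In your induction, $E_1(G^{(k)*})>W(G^{(k)*})$ is exactly what the previous step delivers, so every intermediate requirement $f(d+2k)\ge n+2k$ becomes automatic and the induction closes; this observation even shows that the numerical hypotheses of Theorem~\ref{uni-pend} and of the corollary are redundant. Alternatively, strengthen the assumption to $f(d)\ge n$; then your monotonicity computation really does carry all stages in the direction you intended. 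As written, however, the crucial step of your proposal fails.
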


\begin{proof}
Since $G^{1*}\cong G^{*}$, the result for $\ell=1$ follows from Theorem~\ref{uni-pend}. Clearly, $G^{k*}$ is a universally diametrical graph for $k\in [\ell]$. Since $G^{\ell*}$ can be obtained by attaching a pendant vertex to each vertex of universally pair, respectively, in $G^{(\ell-1)*}$ which is order $n+2\ell-2$ and has diameter $d+2\ell-2$, our result holds by repeatedly applying Theorem~\ref{uni-pend}.
\end{proof}

%%%%%%%%%%%%%%%%%%%%%%%%%%%%%%%%%%%
\section{Cartesian product graphs}
\label{sec:Cartesian}
%%%%%%%%%%%%%%%%%%%%%%%%%%%%%%%%%%%

In this final section we prove that  if graphs have the property $W\geq E_1$, then the same property holds for the Cartesian product of these graph. Recall that the {\em Cartesian product} $G\cp H$ of graphs $G$ and $H$ is the graph with $V(G\cp H) = V(G)\times V(H)$ and $(g,h)$ is adjacent to $(g',h')$ if either $gg'\in E(G)$ and $h=h'$, or $g=g'$ and $hh'\in E(H)$. Since $\varepsilon_{G\cp H}(g,h) = \varepsilon_G(g) + \varepsilon_H(h)$ (cf.~\cite{HIK2011}), the following lemma is straightforward.

\begin{lemma}\label{NL1}
If $G$ and $H$ are connected graphs, then
$$E_1(G\cp H) = n(H) E_1(G) + n(G) E_1(H) + 2\varepsilon(G)\varepsilon(H)\,.$$
 \end{lemma}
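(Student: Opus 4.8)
The plan is to derive the formula for $E_1(G\cp H)$ directly from the stated eccentricity identity $\varepsilon_{G\cp H}(g,h)=\varepsilon_G(g)+\varepsilon_H(h)$ by expanding the square in the definition of $E_1$. By definition,
$$E_1(G\cp H)=\sum_{(g,h)\in V(G)\times V(H)}\varepsilon_{G\cp H}(g,h)^2=\sum_{g\in V(G)}\sum_{h\in V(H)}\bigl(\varepsilon_G(g)+\varepsilon_H(h)\bigr)^2\,.$$
First I would expand the squared term as $\varepsilon_G(g)^2+2\varepsilon_G(g)\varepsilon_H(h)+\varepsilon_H(h)^2$, turning the double sum into three separate double sums.

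Next I would evaluate each of the three sums by factoring the independent index out of the inner summation. For the first sum, $\sum_{g}\sum_{h}\varepsilon_G(g)^2$, the summand does not depend on $h$, so summing over $h\in V(H)$ contributes a factor of $n(H)$, giving $n(H)\sum_{g}\varepsilon_G(g)^2=n(H)E_1(G)$. Symmetrically, the third sum yields $n(G)E_1(H)$. For the cross term, $\sum_{g}\sum_{h}2\varepsilon_G(g)\varepsilon_H(h)$, the two factors separate as $2\bigl(\sum_{g}\varepsilon_G(g)\bigr)\bigl(\sum_{h}\varepsilon_H(h)\bigr)=2\varepsilon(G)\varepsilon(H)$, using the definition of the total eccentricity $\varepsilon(\cdot)$. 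Summing the three contributions gives exactly the claimed identity.

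Since the paper flags this lemma as \textit{straightforward} and the entire argument is a single application of bilinearity of summation after expanding a square, I do not anticipate any real obstacle; the only thing to be careful about is bookkeeping the factors $n(H)$ and $n(G)$ correctly (attaching $n(H)$ to the $G$-term and $n(G)$ to the $H$-term, not the reverse) and recognizing that the total eccentricity sums are precisely $\varepsilon(G)$ and $\varepsilon(H)$. No induction, case analysis, or auxiliary estimate is needed, and connectedness of $G$ and $H$ is used only implicitly to guarantee that all eccentricities are finite and that the product distance (hence eccentricity) formula holds.
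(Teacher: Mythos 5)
Your proof is correct and matches the paper's intended argument exactly: the paper declares the lemma ``straightforward'' from the eccentricity identity $\varepsilon_{G\cp H}(g,h)=\varepsilon_G(g)+\varepsilon_H(h)$, and your expansion of the square with the resulting three sums is precisely the computation being left to the reader. No issues.
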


\begin{theorem}\label{C-product}
If $G$ and $H$ are connected graphs, $W(G)\geq E_1(G)$, $W(H)\geq E_1(H)$, and $\max\{n(G),n(H)\}>2$, then $W(G\cp H) > E_1(G\cp H)$.
\end{theorem}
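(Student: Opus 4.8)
The plan is to combine Lemma~\ref{NL1} with the classical companion identity for the Wiener index of a Cartesian product and then to estimate the difference $W(G\cp H)-E_1(G\cp H)$. First I would record that
\[
W(G\cp H)=n(H)^2\,W(G)+n(G)^2\,W(H),
\]
which is immediate from $d_{G\cp H}((g,h),(g',h'))=d_G(g,g')+d_H(h,h')$: splitting the double sum defining $W(G\cp H)$ into its two coordinates, the first coordinate contributes $n(H)^2$ copies of $W(G)$ and the second $n(G)^2$ copies of $W(H)$. Writing $p=n(G)$, $q=n(H)$, $a=\varepsilon(G)$, $b=\varepsilon(H)$ and subtracting the formula of Lemma~\ref{NL1} gives
\[
W(G\cp H)-E_1(G\cp H)=\bigl(q^2W(G)-qE_1(G)\bigr)+\bigl(p^2W(H)-pE_1(H)\bigr)-2ab.
\]

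Next I would bring in the two hypotheses. Because $W(G)\ge E_1(G)$, the first bracket is at least $q^2E_1(G)-qE_1(G)=q(q-1)E_1(G)$, and symmetrically the second is at least $p(p-1)E_1(H)$. To dominate the cross term $2ab$ I would pass from $E_1$ to total eccentricity via Cauchy--Schwarz: $a^2=\bigl(\sum_{v}\varepsilon_G(v)\bigr)^2\le p\,E_1(G)$, so $E_1(G)\ge a^2/p$ and likewise $E_1(H)\ge b^2/q$. Substituting and then applying the AM--GM inequality to the two resulting fractions yields
\[
W(G\cp H)-E_1(G\cp H)\ \ge\ q(q-1)\frac{a^2}{p}+p(p-1)\frac{b^2}{q}-2ab\ \ge\ 2ab\Bigl(\sqrt{(p-1)(q-1)}-1\Bigr).
\]

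Finally I would check that this last quantity is strictly positive, and this is exactly where the order hypothesis enters. Taking both factors to be nontrivial (at least two vertices), we have $p,q\ge 2$, hence $a=\varepsilon(G)\ge 1$ and $b=\varepsilon(H)\ge 1$, so $ab>0$; and since $\max\{p,q\}>2$ forbids $p=q=2$, we get $(p-1)(q-1)>1$, i.e.\ $\sqrt{(p-1)(q-1)}>1$. Multiplying the two strictly positive factors then gives $W(G\cp H)>E_1(G\cp H)$. The only genuinely delicate aspect is the bookkeeping of strictness: every intermediate estimate above is non-strict, so the argument rests entirely on the final factor $\sqrt{(p-1)(q-1)}-1$ being positive, which is precisely the content of the hypothesis $\max\{n(G),n(H)\}>2$. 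I would therefore make sure to spell out the passage from the exclusion of $p=q=2$ to $(p-1)(q-1)>1$, and to justify $ab\neq 0$ (so that nontrivial factors are needed, the degenerate $K_1$ case being the reason the inequality must be strict rather than weak) before concluding.
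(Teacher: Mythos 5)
Your proof is correct (modulo the degenerate-factor caveat you yourself raise) and it follows the same skeleton as the paper's: both start from the identity $W(G\cp H)=n(H)^2W(G)+n(G)^2W(H)$ together with Lemma~\ref{NL1}, and both use the hypotheses $W\ge E_1$ to handle the diagonal terms. Where you genuinely diverge is in absorbing the cross term $2\varepsilon(G)\varepsilon(H)$: the paper uses the cruder bound $E_1(X)\ge\varepsilon(X)$ (hence $\varepsilon(X)\le W(X)$) to write $2\varepsilon(G)\varepsilon(H)\le \varepsilon(H)W(G)+\varepsilon(G)W(H)$, and then finishes by collecting terms and invoking $\varepsilon(X)\le n(X)(n(X)-1)$; you instead use Cauchy--Schwarz in the form $\varepsilon(X)^2\le n(X)E_1(X)$ followed by AM--GM, which yields the cleaner closed-form lower bound $2\varepsilon(G)\varepsilon(H)\bigl(\sqrt{(n(G)-1)(n(H)-1)}-1\bigr)$. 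Both routes are valid; yours makes the role of the hypothesis $\max\{n(G),n(H)\}>2$ completely transparent (it is exactly what forces $\sqrt{(p-1)(q-1)}>1$), at the cost of requiring the strictness discussion you give at the end. On that point you are ahead of the paper: the issue is real and is glossed over there. If one factor is $K_1$, then $G\cp K_1\cong G$ and the strict conclusion can actually fail --- for example $G=K_5-e$ satisfies $W(G)=E_1(G)=11$ and $n(G)=5>2$, yet $W(G\cp K_1)=E_1(G\cp K_1)$. The paper's final estimate likewise degenerates to $\ge 0$ in this case, since for $n(H)=1$ both $W(H)=0$ and $n(H)(n(H)-1)-\varepsilon(H)=0$, so both arguments tacitly require $\min\{n(G),n(H)\}\ge 2$ --- an assumption you correctly identify as necessary rather than a defect of your own approach.
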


\begin{proof}
It is well-known for a long time, see~\cite{graovac-1991, yeh-1994}, that $W(G\cp H) = n(H)^2W(G) + n(G)^2W(H)$.  Then, combining Lemma \ref{NL1} with the fact that $E_1(X)\geq \varepsilon(X)$ holds for any connected graph $X$,  and setting $Z = W(G\cp H) - E_1(G\cp H)$, we have
\begin{eqnarray*}
Z & =& n(H)^2W(G) + n(G)^2W(H) - n(H)E_1(G)-n(G)E_1(H) - 2\varepsilon(G)\varepsilon(H) \\
	& \geq & n(H)(n(H)-1)W(G) + n(G)(n(G)-1)W(H)-2\varepsilon(G)\varepsilon(H)  \\
	& \geq & \Big[n(H)(n(H)-1)-\varepsilon(H)\Big]W(G)+\Big[n(G)(n(G)-1)-\varepsilon(G)\Big]W(H)\\
    &>&0\,,
\end{eqnarray*}
where the last inequality holds by the assumption $\max\{n(G),n(H)\}>2$.
\end{proof}

Similarly as Lemma~\ref{NL1}, but with a little more effort, the next result can be deduced.

\begin{lemma}{\rm(\cite{XDG2019})}
Let $G$ and $H$ be two connected graphs. Then
\begin{eqnarray*}
E_2(G\cp H) & = & m(H) E_1(G) + n(H) E_2(G) +m(G) E_1(H) + n(G) E_2(H) + \\
& &  \varepsilon(G)\xi^c(H)+\varepsilon(H)\xi^c(G)\,.
\end{eqnarray*}
\end{lemma}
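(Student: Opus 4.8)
The plan is to compute $E_2(G\cp H)$ directly from its definition, exploiting the additive eccentricity formula $\varepsilon_{G\cp H}(g,h)=\varepsilon_G(g)+\varepsilon_H(h)$ that precedes Lemma~\ref{NL1}, together with the canonical partition of the edge set of a Cartesian product. First I would recall that every edge of $G\cp H$ is either a \emph{$G$-layer} edge $(g,h)(g',h)$ with $gg'\in E(G)$ and $h\in V(H)$ fixed, or an \emph{$H$-layer} edge $(g,h)(g,h')$ with $hh'\in E(H)$ and $g\in V(G)$ fixed; there are $n(H)$ isomorphic copies of $G$ and $n(G)$ isomorphic copies of $H$. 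Summing the products of eccentricities over these two edge classes separately will yield the two symmetric halves of the claimed expression.

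For the $G$-layer edges the contribution equals $\sum_{h\in V(H)}\sum_{gg'\in E(G)}\bigl[\varepsilon_G(g)+\varepsilon_H(h)\bigr]\bigl[\varepsilon_G(g')+\varepsilon_H(h)\bigr]$. Expanding the inner bracket into $\varepsilon_G(g)\varepsilon_G(g')+\varepsilon_H(h)\bigl(\varepsilon_G(g)+\varepsilon_G(g')\bigr)+\varepsilon_H(h)^2$ and summing over $gg'\in E(G)$ turns the three summands into $E_2(G)$, $\varepsilon_H(h)\,\xi^c(G)$, and $m(G)\,\varepsilon_H(h)^2$. The only non-mechanical identification here is the middle one, and it rests on the standard handshake-type identity $\sum_{gg'\in E(G)}\bigl(\varepsilon_G(g)+\varepsilon_G(g')\bigr)=\sum_{g\in V(G)}\deg_G(g)\,\varepsilon_G(g)=\xi^c(G)$. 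Summing what remains over $h\in V(H)$ and using $\sum_h\varepsilon_H(h)=\varepsilon(H)$ together with $\sum_h\varepsilon_H(h)^2=E_1(H)$ gives the $G$-layer total $n(H)E_2(G)+\varepsilon(H)\xi^c(G)+m(G)E_1(H)$.

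By the symmetry of $G\cp H$ under interchanging the factors, the $H$-layer edges contribute $n(G)E_2(H)+\varepsilon(G)\xi^c(H)+m(H)E_1(G)$, which one reads off from the previous paragraph by swapping the roles of $G$ and $H$. Adding the two layer totals produces exactly the six terms in the statement. I expect the argument to present no genuine conceptual obstacle; it is essentially a careful bookkeeping exercise, and the single point that requires attention is recognizing that the cross terms $\varepsilon_H(h)\bigl(\varepsilon_G(g)+\varepsilon_G(g')\bigr)$, once summed over a layer, assemble into the eccentric connectivity index rather than into $E_1$ or $E_2$. The remaining care is simply to avoid double-counting edges across the two classes, which the disjoint $G$-layer/$H$-layer partition guarantees.
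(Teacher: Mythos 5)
Your computation is correct: the layer-by-layer edge partition, the identification $\sum_{gg'\in E(G)}\bigl(\varepsilon_G(g)+\varepsilon_G(g')\bigr)=\xi^c(G)$, and the final bookkeeping all check out, and the six terms assemble exactly as claimed. Note that the paper does not prove this lemma itself but cites it from~\cite{XDG2019}, remarking only that it can be deduced ``similarly as Lemma~\ref{NL1}, but with a little more effort''; your argument is precisely that deduction, so it matches the intended approach.
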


From Theorems~\ref{thm:2-non-SC} and~\ref{universal}, we know that there exist graphs $G$ satisfying $W(G)\geq \max\{E_1(G),E_2(G)\}$.  Define the \textit{average transmission} of a connected graph $G$ as $\avt(G)=\frac{2W(G)}{n(G)}$. Then we have:

\begin{theorem}
Let $G$ and $H$ be connected graphs with diameters $d_G$ and $d_H$, respectively, and let $W(G) \geq \max\{E_1(G),E_2(G)\}$ and $W(H) \geq \max\{E_1(H),E_2(H)\}$. If $\avt(G)>4d_G^2d_H$ and $\avt(H)>4d_H^2d_G$, then $W(G\cp H) > E_2(G\cp H)$.
\end{theorem}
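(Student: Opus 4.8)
The plan is to reduce the product inequality to two ``single-factor'' estimates using the two product formulas already at our disposal. Recall that $W(G\cp H)=n(H)^2W(G)+n(G)^2W(H)$ and that, by the last lemma,
\[
E_2(G\cp H)=m(H)E_1(G)+n(H)E_2(G)+m(G)E_1(H)+n(G)E_2(H)+\varepsilon(G)\xi^c(H)+\varepsilon(H)\xi^c(G).
\]
I would therefore aim to prove the single estimate
\[
n(H)^2W(G)>m(H)E_1(G)+n(H)E_2(G)+\varepsilon(G)\xi^c(H)
\]
together with the analogue obtained by interchanging $G$ and $H$; summing the two gives $W(G\cp H)>E_2(G\cp H)$ immediately. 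Observe that the hypotheses already force $n(G),n(H)\ge 2$ and hence $d_G,d_H\ge 1$, since $n=1$ would make $\avt=0$ and contradict $\avt>4d^2d'>0$.

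First I would convert the average-transmission hypotheses into lower bounds on the Wiener indices: $\avt(G)>4d_G^2d_H$ gives $W(G)>2d_G^2d_Hn(G)$, and likewise $W(H)>2d_H^2d_Gn(H)$. Then I would collect the elementary upper bounds needed for the right-hand sides. The assumption $W(G)\ge\max\{E_1(G),E_2(G)\}$ gives $E_1(G)\le W(G)$ and $E_2(G)\le W(G)$; since every eccentricity is at most the diameter, $\varepsilon(G)=\sum_{v}\varepsilon_G(v)\le d_Gn(G)$ and $\xi^c(H)=\sum_v\deg_H(v)\varepsilon_H(v)\le d_H\sum_v\deg_H(v)=2d_Hm(H)$; and finally $m(H)\le\binom{n(H)}{2}$.

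Substituting these bounds, the target estimate would follow once $(n(H)^2-m(H)-n(H))\,W(G)\ge 2d_Gd_Hn(G)\,m(H)$. Here I would invoke $W(G)>2d_G^2d_Hn(G)$. Because $m(H)\le\binom{n(H)}{2}$ forces $n(H)^2-m(H)-n(H)\ge m(H)\ge 0$, the coefficient of $W(G)$ is positive (as $n(H)\ge 2$), so it remains only to verify the clean combinatorial inequality $(n(H)^2-m(H)-n(H))\,d_G\ge m(H)$; this holds since $n(H)^2-m(H)-n(H)\ge m(H)$ and $d_G\ge 1$. The $H$-factor estimate is identical after swapping $G\leftrightarrow H$, and adding the two strict inequalities finishes the proof.

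The main obstacle, and the point where the precise form of the hypothesis is used, is the pair of mixed cross-terms $\varepsilon(G)\xi^c(H)$ and $\varepsilon(H)\xi^c(G)$: these couple both factors and cannot be absorbed by the crude bounds $E_1,E_2\le W$ alone. The $\avt$ conditions are calibrated exactly to supply the extra factor $d_G^2$ (respectively $d_H^2$) in $W(G)$ needed to dominate the diameter-weighted term $2d_Gd_Hn(G)m(H)$. Getting the bookkeeping right — attaching $\varepsilon(G)\xi^c(H)$ to the $G$-factor rather than the $H$-factor, and checking that the residual combinatorial inequality survives even in the extreme case $d_G=1$ (where $G$ is complete) — is the only genuinely delicate step; everything else is routine substitution.
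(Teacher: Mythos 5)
Your proposal is correct and is essentially the paper's own proof: the same two product formulas, the same bounds $E_1,E_2\le W$, $n^2-n-m\ge m$ (from $m\le\binom{n}{2}$), $\varepsilon(G)\le n(G)d_G$ and $\xi^c(H)\le 2m(H)d_H$, and the same use of the $\avt$ hypotheses to dominate the cross terms. The only difference is organizational: you split $W(G\cp H)-E_2(G\cp H)$ into two symmetric single-factor inequalities and add them, whereas the paper estimates the whole difference in one chain, grouping each cross term $\varepsilon(G)\xi^c(H)$ with $m(H)W(G)$ exactly as you do.
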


\begin{proof}
As already mentioned in the proof of Theorem~\ref{C-product}, $W(G\cp H) = n(H)^2W(G) + n(G)^2W(H)$. Since $m(X)\le \binom{n(X)}{2}$  holds for any graph $X$, we have $n(X)^2-m(X)-n(X)\geq m(X)$  for any graph $X$. Hence, setting $A = W(G\cp H) - E_2(G\cp H)$ and using the assumptions $W(G)\geq \max\{E_1(G),E_2(G)\}$ and $W(H)\geq \max\{E_1(H),E_2(H)\}$, we can estimate as follows:
\begin{eqnarray*}
A & =& n(H)^2W(G)-m(H) E_1(G)- n(H) E_2(G)+ n(G)^2W(H)\\
 & & - m(G) E_1(H)- n(G) E_2(H) -\varepsilon(G)\xi^c(H)-\varepsilon(H)\xi^c(G) \\
	& \geq & \Big[n(H)^2-m(H)-n(H)\Big]W(G) + \Big[n(G)^2-m(G)-n(G)\Big]W(H)\\
&&-\varepsilon(G)\xi^c(H)-\varepsilon(H)\xi^c(G)  \\
	& \geq & m(H)W(G)+m(G)W(H)-2m(H)n(G)d_G^2d_H-2m(G)n(H)d_H^2d_G\\
  &=&m(H)\Big[W(G)-2n(G)d_G^2d_H\Big]+m(G)\Big[W(H)-2n(H)d_Gd_H^2\Big]\\
    &>&0.
\end{eqnarray*}
Note that the last inequality holds because of the assumptions $\avt(G)>4d_G^2d_H$ and $\avt(H)>4d_H^2d_G$.
\end{proof}

\section*{Acknowledgements}

K. X. is supported by supported by NNSF of China
(grant No. 11671202) and China-Slovene bilateral grant 12-9,
  K. C. D. is supported by National Research Foundation
 funded by the Korean government (grant no. 2017R1D1A1B03028642). S.K. is supported by Slovenian Research Agency (research core funding P1-0297,
projects J1-9109, J1-1693, N1-0095, and the bilateral grant BI-CN-18-20-008).

\end{document}